\theoremstyle{definition}
\newtheorem{mydef}{Definition}[section]
\theoremstyle{plain}
\newtheorem{mythm}[mydef]{Theorem}
\newtheorem{myprop}[mydef]{Proposition}
\newtheorem{mycor}[mydef]{Corollary}
\newtheorem{mylem}[mydef]{Lemma}
\theoremstyle{remark}
\newtheorem{myex}[mydef]{Example}
\newcommand{\btimes}[2]{{}_{#1}\!\times_{#2}}				
\newcommand{\id}{\operatorname{id}}					
\DeclareMathOperator*{\colim}{colim}					
\DeclareMathOperator{\sd}{Sd}						
\DeclareMathOperator{\ex}{Ex}
\DeclareMathOperator{\Hom}{Hom}
\newcommand{\osim}{{\mathord{\sim}}}					
\newcommand{\inj}{\ensuremath{\mathrm{-inj}}}
\newcommand{\proj}{\ensuremath{\mathrm{-proj}}}
\newcommand{\fib}{\ensuremath{\mathrm{-fib}}}
\newcommand{\cof}{\ensuremath{\mathrm{-cof}}}
\newcommand{\cat}[1]{\mathcal{#1}}					
\newcommand*{\xrightrightarrows}[2]{\mathrel{
  \settowidth{\@tempdima}{$\scriptstyle#1$}
  \settowidth{\@tempdimb}{$\scriptstyle#2$}
  \ifdim\@tempdimb>\@tempdima \@tempdima=\@tempdimb\fi
  \mathop{\vcenter{
    \offinterlineskip\ialign{\hbox to\dimexpr\@tempdima+1em{##}\cr
    \rightarrowfill\cr\noalign{\kern.5ex}
    \rightarrowfill\cr}}}\limits^{\!#1}_{\!#2}}}
\newcommand{\adjct}[4]{#1\colon\cat{#2}\leftrightarrows\cat{#3}:\!#4}	
\newcommand{\nerve}{N}							
\newcommand{\real}[1]{\left|#1\right|}					
\newcommand{\thol}{\tau_1\sd^2}						
\newcommand{\thor}{\ex^2\nerve}						
\newcommand{\trisimp}{\xymatrix@C=0pt@M=0pt@R=.5pt{&\cdot\ar@{-}[dr]&\\\cdot\ar@{-}[ur]\ar@{-}[r]&&\cdot}}
\newcommand{\naturalto}{\Rightarrow}
\newcommand{\Ac}{\mathbf{Ac}}						
\newcommand{\sSet}{\mathbf{sSet}}					
\newcommand{\Cat}{\mathbf{Cat}}						
\newcommand{\ie}{\emph{i.e.\ }}
\newcommand{\wrt}{with respect to\ }
\newcommand{\horn}[2]{\Lambda_#2^#1}
\newcommand{\stdsimp}[1]{\Delta^#1}
\begin{document}

\title{A Model structure on the Category of small acyclic Categories}
\author{Roman Bruckner}
\address{Fachbereich Mathematik und Informatik, Universit\"at Bremen,
  Bibliothekstra\ss{}e~1, 28359~Bremen, Bremen, Germany.}
\email[Roman Bruckner]{bruckner@math.uni-bremen.de }

\begin{abstract}
	In this paper, we show that the Thomason model structure restricts to a Quillen equivalent cofibrantly generated model structure on the category of acyclic categories, whose generating cofibrations are the same as those generating the Thomason model structure. To understand the Thomason model structure, we need to have a closer look at the (barycentric) subdivision endofunctor on the category of simplicial sets.
	This functor has a well known right adjoint, called Kan's Ex functor. Taking the subdivision twice and then the fundamental category yields a left adjoint of an adjunction between the category of simplicial sets and the category of small categories, whose right adjoint is given by applying the Ex functor twice on the nerve of a category. This adjunction lifts the cofibrantly generated Quillen model structure on simplicial sets to a cofibrantly generated model structure on the category of small categories, the Thomason model structure. The generating sets are given by the image of the generating sets of the Quillen model structure on simplicial sets under the aforementioned adjunction.
	We furthermore show that the category of acyclic categories is proper and combinatorial with respect to said model structure. That is weak equivalences behave nicely with respect to pushouts along fibrations and cofibrations, and cofibrations satisfy certain smallness conditions which allow us to work with sets instead of proper classes.
\end{abstract}

\maketitle

\pagenumbering{arabic}
\setcounter{page}{1}

\section{Introduction}
An acyclic category is a category without inverses and non-identity endomorphisms. Acyclic categories have been known under several names. They were called small categories without loops, or scowls, by Haefliger in \cite{haefl:npc}, and loop-free categories by Haucourt \cite{haucourt:lfc} and probably several others. In this paper we adapt the terminology from  \cite{kozlov:caltop} and call them acyclic categories.
Aside from the categorical perspective, we can view acyclic categories as generalized posets, allowing more than one morphism between any ordered pair of objects. Supporting that point of view, there is a sequence of reflective embeddings $\mathbf{Pos}\hookrightarrow\Ac\hookrightarrow\Cat$.\\
There is a cofibrantly generated model structure on the category of simplicial sets, which we refer to as \emph{Quillen model structure}, generated by the sets
\begin{align*}
		I &= \left\{\partial\stdsimp{n}\to\stdsimp{n}\middle|n\in\mathbb{N}\right\}
\intertext{and}
		J &= \left\{\horn{n}{k}\to\stdsimp{n}\middle|n\in\mathbb{N}, k\le n\right\}\text{.}
\end{align*} The (barycentric) subdivision functor $\sd\colon\sSet\to\sSet$ has a right adjoint\\$\ex\colon\sSet\to\sSet$ called Kan's Ex functor, and given by $\ex(X)_n = \sSet(\sd\Delta^n,X)$. This yields an adjunction 
\begin{align*}
\adjct{\thol}{\sSet}{\Cat}{\thor}\text{,}
\end{align*}
where $N$ denotes the usual nerve functor, and $\tau_1$ its left adjoint.

In 1980, Thomason used this adjunction to lift the model structure on $\sSet$ to a Quillen equivalent model structure on $\Cat$ \cite{thomason:ccmc}, which is now called the \emph{Thomason model structure}. This allows us to lift the usual homotopy theory on simplicial sets, and thus also on topological spaces, to the category of small categories.

In 2010, Raptis showed that the Thomason model structure restricts to a model structure on $\mathbf{Pos}$, which is again Quillen equivalent to the Quillen model structure on $\sSet$, and Quillen equivalent to the Thomason model structure on $\Cat$ (cf. \cite{raptis:hotopo}). We fill the missing gap in the sequence $\mathbf{Pos}\hookrightarrow\Ac\hookrightarrow\Cat$ by showing that the Thomason model structure on $\Cat$ restricts to a model structure on $\Ac$.\\
In Section~1 we give a short introduction to acyclic categories and review a method of calculating coequalizers---and thus, in particular, pushouts---in $\Cat$ by means of generalized congruences, which were introduced by Bednarczyk, Borzyszkowski and Pawlowski in 1999 \cite{bbp:gencon}. In Section~2 we give a short introduction to model categories and cofibrantly generated model categories---mostly based on the monographs by Hovey \cite{hovey:modcat} and Hirschhorn \cite{hirschhorn:mcl}---and introduce the notion of locally finite presentability. In Section~3 we establish a model structure on $\Ac$ and show that it is Quillen equivalent to the Thomason model structure on $\Cat$.

\section{Category Theory}

In this section we introduce a few categorical notions which will be needed... In particular, we introduce the notion of Dwyer maps, which are important to understand the Thomason model structure, have a closer look at the adjunction $\adjct{p}{\Ac}{\Cat}{i}$ and develop a theory of generalized congruences. Given a category $\cat{C}$, we denote by $\cat{C}^{(0)}$ its class of objects, and by $\cat{C}^{(1)}$ its class of morphisms. Furthermore we denote by $s,t\colon\cat{C}^{(1)}\to\cat{C}^{(0)}$ the source and target map, and by $\cat{C}(x,y)$ the set of morphisms from $x$ to $y$.

\begin{mydef} Let $\cat{C}$ be a category, and $i\colon\cat{A}\to\cat{C}$ an embedding, \ie a functor that is faithful and injective on objects. We call $i$ (as well as its image in $\cat{C}$) a \emph{sieve}, if for every $y\in i(\cat{A})$, $f\in\cat{C}(x,y)$ implies $x\in i(\cat{A})$ and $f\in i(\cat{A})$. The dual of a sieve is called a \emph{cosieve}.
\end{mydef}

\begin{mydef}
	Let $i\colon\cat{A}\to\cat{C}$ be a sieve. We call $i$ a \emph{Dwyer map}, if there is a decomposition $\cat{A}\xrightarrow{f}\cat{C}'\xrightarrow{j}\cat{C}$ of $i$, such that $j$ is a cosieve in $\cat{C}$ and there is a retraction $r\colon\cat{C}'\to\cat{A}$ together with a natural transformation $\eta\colon fr\naturalto \id_{\cat{C}'}$ such that $\eta f = \id_f$.
\end{mydef}

Note that the original definition of a Dwyer map by Thomason \cite{thomason:ccmc} was stronger, in the sense that $r$ was supposed to be an adjoint to $f$. 19 years later, Cisinski introduced the weaker notion of a pseudo-Dwyer morphism in \cite{cisinski:dnpr} which proved to be more useful in our context. Since we will not need Thomsons original defintion throghout this paper, we drop the pseudo for the sake of readability.

\begin{mydef} A category $\cat{C}$ is called \emph{acyclic}\index{category!acyclic}, if it has no inverses and no nonidentity endomorphisms.
\end{mydef}

We denote by $\Ac$ the category of small acyclic categories, with morphisms the functors between acyclic categories. It is obvious that $\Ac$ is a full subcategory of $\Cat$. Hence there is a fully faithful inclusion $i\colon\Ac\to\Cat$. The inclusion $i$ has a left adjoint, called \emph{acyclic reflection}, which we construct as follows: given a category $\cat{C}\in\Cat$, define $p(\cat{C})$ to be the acyclic category with objectset
	\begin{align*}
		p(\cat{C})^{(0)} = \cat{C}^{(0)}/{\sim_o}
	\end{align*}
where ${\sim_o}$ is the equivalence relation generated by $x\sim_o y$ if $\cat{C}(x,y)\ne\emptyset\ne\cat{C}(y,x)$ and morphisms
	\begin{align*}
		p(\cat{C})^{(1)} = \cat{C}^{(1)}/{\sim_m}
	\end{align*}
	where $\sim_m$ is generated by $\id_x\sim_m\id_y$ if $x\sim_o y$ and $f\sim_m\id_x$ if $f\in\cat{C}(x,y)$ or $f\in\cat{C}(y,x)$, and $\cat{C}(x,y)\ne\emptyset\ne\cat{C}(y,x)$.

	Setting $\id_{[x]}=[\id_x]$, it is easy to see that the composition inherited from $\cat{C}$ is well defined on $p(\cat{C})$, and hence $p$ is well defined on objects. Given a functor $F\colon\cat{C}\to\cat{D}$ in $\Cat$ the components induce well defined maps on $p(\cat{C})$ via
	\begin{align*}
		p(F)([x]) = [p(F(x))]&&\text{and}&&p(F)([f]) = [p(F(f))]\text{.}
	\end{align*}
It is easy to see that this construction yields indeed a functor $p\colon\Cat\to\Ac$, which is left adjoint to the inclusion $i$. Hence $\Ac$ is reflective in $\Cat$, and we can calculate colimits in $\Ac$ by applying the acyclic reflection to the respective colimits in $\Cat$. That is:

\begin{mylem}\label{lem:colimiso} Given a diagram $D\colon I\to\Ac$, we have
	\begin{align*}
		p(\colim\nolimits_I iD) \cong (\colim\nolimits_I D)\text{.}
	\end{align*}
\end{mylem}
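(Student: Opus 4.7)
The plan is to use two standard facts about the reflective adjunction $p\dashv i$ constructed above. First, as a left adjoint, $p$ preserves all colimits, so $p(\colim_I iD)\cong \colim_I(piD)$. Second, since $i$ is fully faithful, the counit $\epsilon\colon pi\naturalto\id_{\Ac}$ of the adjunction is a natural isomorphism, whence $piD\cong D$ as diagrams $I\to\Ac$ and therefore $\colim_I(piD)\cong \colim_I D$. Chaining these two isomorphisms gives the claim.

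If desired, both inputs can be checked by hand rather than cited as general nonsense. Colimit preservation follows from the chain of natural bijections
\begin{align*}
\Ac\bigl(p(\colim\nolimits_I iD),\cat{A}\bigr) \cong \Cat\bigl(\colim\nolimits_I iD, i\cat{A}\bigr) \cong \lim\nolimits_I \Cat(iD, i\cat{A}) \cong \lim\nolimits_I \Ac(D,\cat{A}),
\end{align*}
valid for every $\cat{A}\in\Ac$, where the last step uses that $i$ is fully faithful. That the counit is an isomorphism can be read off directly from the explicit construction of $p$: on an object $\cat{A}\in\Ac$ the generating clauses of $\sim_o$ and $\sim_m$ are vacuous---no two distinct objects of $\cat{A}$ are mutually connected by morphisms, and every endomorphism is an identity---so $pi\cat{A}=\cat{A}$ on the nose, naturally in $\cat{A}$.

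No step is expected to pose a real obstacle; the lemma is essentially a formal consequence of the reflective embedding $\Ac\hookrightarrow\Cat$, and its purpose is simply to make available a concrete recipe for colimits in $\Ac$ (in particular pushouts, which will be needed later) in terms of the already understood colimits in $\Cat$ together with the acyclic reflection $p$.
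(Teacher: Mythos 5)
Your proposal is correct and follows essentially the same route as the paper: $p$ preserves colimits (being a left adjoint) and the counit $pi\naturalto\id_{\Ac}$ is a natural isomorphism, so $p(\colim_I iD)\cong\colim_I piD\cong\colim_I D$. The extra verifications you supply (the $\Hom$-set chain and the observation that the generating relations are vacuous on an acyclic category) are sound but not needed beyond what the paper records.
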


\begin{proof}
	This follows directly from $p$ being a right adjoint, and $pi\naturalto\id_{\Ac}$ being a natural isomorphism, since then
	\begin{equation*}
		p(\colim\nolimits_I iD)\cong (\colim\nolimits_I piD) \cong (\colim\nolimits_I D)\text{.}\qedhere
	\end{equation*}
\end{proof}

An important tool to prove that $\Ac$ inherits the Thomason model structure from $\Cat$ are generalized congruences, which were originally introduced in 1999 in \cite{bbp:gencon}, though we have chosen a notation closer to the later work by E.Haucourt, in particular \cite{haucourt:lfc}. Generalized congruences allow us to calculate coequalizers in $\Cat$, and by the previous lemma also in $\Ac$.

\begin{mydef} Given a small category $\cat{C}$, and an equivalence relation $\sim$ on the set of objects of $\cat{C}$. A \emph{$\mathord{\sim}$--composable} sequence in $\cat{C}$ is a sequence $(f_0,\dotsc,f_n)$ of morphisms in $\cat{C}$, satisfying $t(f_i)\sim s(f_{i+1})$.
\end{mydef}

\begin{mydef}
	Let $\cat{C}$ be a small category. A \emph{generalized congruence} on $\cat{C}$ is an ordered pair of equivalence relations $(\mathord{\sim}_o,\mathord{\sim}_m)$ on $\cat{C}^{(0)}$ respectively on the set of non--empty, $\mathord{\sim}_o$--composable sequences in $\cat{C}$, satisfying the following properties:
	\begin{enumerate}
		\item if $x\sim_oy$, then $(\id_x)\sim_m(\id_y)$.
		\item If $(f_0,\dotsc, f_n)\sim_m(h_0,\dotsc, h_m)$, then $t(f_n)\sim_o t(h_m)$ and $s(f_0)\sim_o s(h_0)$.
		\item If $s(h)=t(f)$, then $(f,h)\sim_m (h\circ f)$.
		\item If
		\begin{align*}
			(f_0,\dotsc, f_n)&\sim_m (f'_{0},\dotsc, f'_{n'})\text{,}\\
			(h_0,\dotsc, h_m)&\sim_m(h'_{0},\dotsc, h'_{m'})\text{, and}\\
			t(f_n)&\sim_o s(h_0)\text{,}\\
		\intertext{then }
			(f_0,\dotsc, f_n, h_0,\dotsc, h_m)&\sim_m(f'_{0},\dotsc, f'_{n'},h'_{0},\dotsc, h'_{m'})\text{.}
		\end{align*}
	\end{enumerate}

\end{mydef}

Given a generalized congruence on a category $\cat{C}$, we can define the quotient of that category, thanks to the following proposition (cf. \cite[Proposition~1.6]{haucourt:lfc}):

\begin{myprop}
	Let $(\osim_o,\osim_m)$ be a generalized congruence on a category $\cat{C}$, and $\mathbf{F}\subseteq(\cat{C}\downarrow\mathbf{Cat})$ be the full subcategory with objects being functors $F$, satisfying the following properties:
	\begin{enumerate}
		\item for all objects $x,y\in\cat{C}$, if $x\sim_o y$, then $F(x)=F(y)$, and
		\item for all $\osim_o$--composable sequences $(f_0, \dotsc, f_n)$ and $(h_0,\dotsc, h_m)$, if
			\begin{align*}
				 (f_0, \dotsc, f_n)&\sim_m (h_0,\dotsc, h_m)\text{,}\\
				\intertext{then}
				F(f_n)\circ\dotsb\circ F(f_0) &= F(h_m)\circ\dotsb\circ F(h_0)\text{.}
			\end{align*}
	\end{enumerate}
	Then $\mathbf{F}$ has an initial object, which we denote by $Q_{\osim}\colon\cat{C}\to\cat{C}/{\osim}$.
\end{myprop}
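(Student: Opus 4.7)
The plan is to construct the quotient category $\cat{C}/\osim$ and the functor $Q_\osim$ explicitly, so that the four axioms of a generalized congruence match precisely the conditions needed to make the construction well-defined and universal. I would define $\cat{C}/\osim$ to have objects $\cat{C}^{(0)}/\osim_o$ and morphisms the $\osim_m$-equivalence classes $[(f_0,\ldots,f_n)]$ of non-empty $\osim_o$-composable sequences. The source and target maps send such a class to $[s(f_0)]$ and $[t(f_n)]$, which are well-defined by axiom (ii). The identity at $[x]$ is $[(\id_x)]$, well-defined by axiom (i). Composition is concatenation of representatives; axiom (iv), together with transitivity of $\osim_o$, guarantees this descends to equivalence classes. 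Associativity is immediate because it is literal concatenation, and the unit laws follow from axiom (iii) applied to $f\circ\id = f$ together with (iv) to splice the resulting identity into longer sequences. I then define $Q_\osim\colon\cat{C}\to\cat{C}/\osim$ by $x\mapsto[x]$ and $f\mapsto[(f)]$; preservation of identities is by construction, and $Q_\osim(h\circ f)=Q_\osim(h)\circ Q_\osim(f)$ is exactly axiom (iii). By construction, $Q_\osim$ satisfies the two conditions defining $\mathbf{F}$.

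For initiality, given any $F\colon\cat{C}\to\cat{D}$ in $\mathbf{F}$, I would define $\bar F\colon\cat{C}/\osim\to\cat{D}$ on objects by $[x]\mapsto F(x)$ (well-defined by condition (i) for $\mathbf{F}$) and on morphisms by $[(f_0,\ldots,f_n)]\mapsto F(f_n)\circ\cdots\circ F(f_0)$. This expression makes sense in $\cat{D}$ precisely because $F$ collapses each $\osim_o$-class to a single object, turning $\osim_o$-composable sequences into strictly composable ones; well-definedness on $\osim_m$-classes is condition (ii) for $\mathbf{F}$. Functoriality of $\bar F$ is evident from the concatenation description of composition in $\cat{C}/\osim$, and the equation $\bar F\circ Q_\osim=F$ is a direct check on generators. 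Uniqueness follows because every object of $\cat{C}/\osim$ lies in the image of $Q_\osim$ and every morphism factors as $[(f_0,\ldots,f_n)]=Q_\osim(f_n)\circ\cdots\circ Q_\osim(f_0)$, so any functor $G$ with $G\circ Q_\osim=F$ must coincide with $\bar F$ on both objects and morphisms.

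The argument is essentially bookkeeping---the axioms have been arranged exactly so that the construction goes through---so the main obstacle is the disciplined accounting of which axiom justifies which well-definedness claim. The subtlest point is that we must work with $\osim_o$-composable rather than strictly composable sequences, since after passing to the quotient the targets and sources of successive morphisms need only agree up to $\osim_o$; this is exactly what the hypothesis $t(f_n)\osim_o s(h_0)$ in axiom (iv) is for, and also why axiom (ii) is indispensable for source and target to descend to $\cat{C}/\osim$.
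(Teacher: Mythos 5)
Your construction is correct and is exactly the explicit description of $\cat{C}/\osim$ that the paper records immediately after the proposition (objects $\cat{C}^{(0)}/\osim_o$, morphisms $\osim_m$-classes of $\osim_o$-composable sequences, composition by concatenation), with the axioms (i)--(iv) supplying precisely the well-definedness claims you attribute to them; the paper itself only cites the literature for the proof, so your verification fills in the same argument rather than a different one.
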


\begin{mydef}
	Given the functor $Q_{\osim}\colon\cat{C}\to\cat{C}/\osim$ as above, we call $\cat{C}/\osim$ the \emph{quotient}\index{category!quotient} of $\cat{C}$, and $Q_\osim$ the corresponding \emph{quotient functor}\index{functor!quotient}.
\end{mydef}

There is an explicit construction for the quotient category $\cat{C}/\osim$, given in \cite{bbp:gencon}: the objects of $\cat{C}/\osim$ are the equivalence classes of objects of $\cat{C}$ \wrt $\osim_o$, whereas the morphisms are given by equivalence classes of $\osim_o$--composable sequences in $\cat{C}^{(1)}$ \wrt $\osim_m$. For the sake of readability, we denote equivalence classes with respect to both relations by $[-]$.
\begin{enumerate}
	\item $(\cat{C}/\osim)^{(0)} = \left\{[x]\middle| x\in\cat{C}^{(0)}\right\}$.
	\item $(\cat{C}/\osim)^{(1)} = \left\{[(f_0,\dotsc,f_n)]\middle| f_i\in\cat{C}^{(1)}, [t(f_i)]= [s(f_{i+1})]\right\}$
	\item $\id_{[x]} = [\id_x]$
	\item $s([(f_0,\dotsc,f_n)]) = [s(f_0)]$ and $t([(f_0,\dotsc,f_n)]) = [t(f_n)]$
	\item $[(h_0,\dotsc, h_m)]\circ [(f_0,\dotsc,f_n)] = [(f_0,\dotsc ,f_n,h_0,\dotsc ,h_m)]$
\end{enumerate}

A \emph{relation} $R$ on a category $\cat{C}$ is a pair $R=(R_o, R_m)$, where $R_o$ is a relation on the set of objects of $\cat{C}$, and $R_m$ is a relation on the set of finite, nonempty sequences of morphisms of $\cat{C}$. Ordered by inclusion, they form a complete lattice. Generalized congruences are examples of relations on a category. In particular, the total relation which identifies all objects and morphisms is a generalized congruence. Hence for any relation $R$, there is a smallest generalized congruence containing $R$, which we call the \emph{principal congruence} generated by $R$. The following proposition---originally \cite[Proposition~4.1]{bbp:gencon}---allows us to construct coequalizers as quotients by principal congruences in $\Cat$:

\begin{myprop}
	Let $\cat{C}\xrightrightarrows{F}{G}\cat{D}$ be functors between small categories, let $\sim_{F=G}$ be the relation on $\cat{D}$ defined by $F(x)\sim_{F=G} G(x)$ and $F(f)\sim_{F=G} G(f)$ for all $x\in\cat{C}^{(0)}$, $f\in\cat{C}^{(1)}$. Let $\osim$ be the principal congruence on $\cat{D}$ generated by $\sim_{F=G}$. Then the quotient functor $Q_\osim\colon\cat{D}\to\cat{D}/\osim$ is the coequalizer of $F$ and $G$.
\end{myprop}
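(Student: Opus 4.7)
The plan is to establish the two halves of the coequalizer property. First, I verify that $Q_{\osim}$ coequalizes $F$ and $G$: by construction of $\sim_{F=G}$ we have $F(x)\sim_{F=G} G(x)$ and $F(f)\sim_{F=G} G(f)$, and since $\osim$ extends $\sim_{F=G}$, this gives $[F(x)]=[G(x)]$ and $[F(f)]=[G(f)]$ in $\cat{D}/\osim$. The explicit description of the quotient then yields $Q_{\osim}F=Q_{\osim}G$ on both objects and morphisms.

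For the universal property, let $H\colon\cat{D}\to\cat{E}$ be any functor with $HF=HG$. My strategy is to invoke the preceding proposition, which identifies $Q_{\osim}$ as the initial object of the category $\mathbf{F}$ of functors out of $\cat{D}$ that respect $\osim$ in the stated sense. Thus it suffices to exhibit $H$ as an object of $\mathbf{F}$; the uniqueness and existence of a mediating functor $\bar H\colon\cat{D}/\osim\to\cat{E}$ with $\bar H\circ Q_{\osim}=H$ is then automatic from initiality.

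The key step, and where essentially all of the real work lies, is the lemma that every functor $H\colon\cat{D}\to\cat{E}$ naturally determines a generalized congruence $\bigl(R_o^H,R_m^H\bigr)$ on $\cat{D}$, given by $x\,R_o^H\,y$ iff $H(x)=H(y)$, and $(f_0,\dotsc,f_n)\,R_m^H\,(h_0,\dotsc,h_m)$ iff the two sequences are $R_o^H$-composable and satisfy $H(f_n)\circ\dotsb\circ H(f_0)=H(h_m)\circ\dotsb\circ H(h_0)$. The four axioms for a generalized congruence translate directly into basic functoriality of $H$: axiom [i] is $H(\id_x)=\id_{H(x)}$, axiom [iii] is preservation of composition, and [ii], [iv] reduce to bookkeeping about source/target matching and associativity of composition in $\cat{E}$. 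The main obstacle in the whole argument is just being careful about the source/target conditions in axiom [iv] when concatenating $\osim_o$-composable sequences.

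With this lemma in hand, the hypothesis $HF=HG$ immediately gives $\sim_{F=G}\subseteq\bigl(R_o^H,R_m^H\bigr)$. Since $\osim$ is by definition the smallest generalized congruence containing $\sim_{F=G}$, minimality forces $\osim\subseteq\bigl(R_o^H,R_m^H\bigr)$, which is precisely the statement that $H$ lies in $\mathbf{F}$. Applying initiality of $Q_{\osim}$ in $\mathbf{F}$ then yields the unique $\bar H\colon\cat{D}/\osim\to\cat{E}$ with $\bar H\circ Q_{\osim}=H$, completing the verification that $Q_{\osim}$ is the coequalizer of $F$ and $G$.
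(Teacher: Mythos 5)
The paper states this proposition without proof, citing it as Proposition~4.1 of Bednarczyk--Borzyszkowski--Pawlowski, so there is no in-paper argument to compare against; your proposal supplies a correct, self-contained proof. The structure is exactly the natural one given the preceding material: the key observation that every functor $H\colon\cat{D}\to\cat{E}$ induces a generalized congruence $(R_o^H,R_m^H)$ is correct (the four axioms do reduce to functoriality of $H$ plus the fact that equal morphisms in $\cat{E}$ share sources and targets, which handles axiom (ii)), and minimality of the principal congruence then places any $H$ with $HF=HG$ inside $\mathbf{F}$. One point worth making explicit: initiality of $Q_{\osim}$ in $\mathbf{F}$ a priori gives uniqueness of the mediating functor only among morphisms of $\mathbf{F}$, but since $\mathbf{F}$ is by definition a \emph{full} subcategory of the comma category $\cocat{\cat{D}}{\Cat}$, any functor $K$ with $K\circ Q_{\osim}=H$ is automatically a morphism of $\mathbf{F}$, so the coequalizer-level uniqueness does follow; your argument is complete once this is noted.
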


The ability to calculate coequalizer in $\Cat$ allows us, in particular, to calculate pushouts due to the following well known lemma (e.g.\ \cite[Remark~11.31]{ahs:joycat}):

\begin{mylem}\label{lem:pushcoeq}
	Let $\cat{C}$ be a category. If we have a diagram $x\xleftarrow{f}w\xrightarrow{g} y$ in $\cat{C}$, and if $x\xrightarrow{\iota_x} x\amalg y\xleftarrow{\iota_y} y$ is a coproduct and $x\amalg y\xrightarrow{h} q$ is a coequalizer of the diagram $w\xrightrightarrows{\iota_x\circ f}{\iota_y\circ g} x\amalg y$, then
	\begin{align*}
		\xymatrix{
			\ar[d]_{f}w\ar[r]^g&y\ar[d]^{h\circ\iota_y}\\
			x\ar[r]^{h\circ\iota_x}& q
			}
	\end{align*}
	is a pushout square.
\end{mylem}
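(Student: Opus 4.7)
The plan is to verify the universal property of the pushout directly, using the coproduct and coequalizer universal properties in sequence. This is the standard ``abstract nonsense'' argument; there is no real obstacle, only the bookkeeping of which diagram commutes with which map.

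First I would check that the square commutes: since $h$ coequalizes $\iota_x\circ f$ and $\iota_y\circ g$, we immediately have $h\circ \iota_x\circ f = h\circ \iota_y\circ g$, which is exactly the commutativity condition $(h\circ\iota_x)\circ f = (h\circ\iota_y)\circ g$.

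Next, to check the universal property, I would take an arbitrary test cocone: morphisms $u\colon x\to z$ and $v\colon y\to z$ in $\cat{C}$ satisfying $u\circ f = v\circ g$. By the universal property of the coproduct $x\amalg y$, there is a unique morphism $[u,v]\colon x\amalg y\to z$ with $[u,v]\circ\iota_x = u$ and $[u,v]\circ\iota_y = v$. Then
\begin{align*}
[u,v]\circ(\iota_x\circ f) = u\circ f = v\circ g = [u,v]\circ(\iota_y\circ g)\text{,}
\end{align*}
so $[u,v]$ coequalizes the parallel pair $\iota_x\circ f,\iota_y\circ g$. The universal property of the coequalizer $h$ then produces a unique $\varphi\colon q\to z$ with $\varphi\circ h = [u,v]$, and post-composing with $\iota_x$ and $\iota_y$ yields $\varphi\circ(h\circ\iota_x) = u$ and $\varphi\circ(h\circ\iota_y) = v$.

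Finally I would argue uniqueness: if $\varphi'\colon q\to z$ also satisfies $\varphi'\circ h\circ\iota_x = u$ and $\varphi'\circ h\circ\iota_y = v$, then $\varphi'\circ h$ is a morphism $x\amalg y\to z$ composing with $\iota_x,\iota_y$ to give $u,v$, so by uniqueness in the coproduct $\varphi'\circ h = [u,v] = \varphi\circ h$; uniqueness in the coequalizer then forces $\varphi' = \varphi$. This establishes the universal property of the pushout.
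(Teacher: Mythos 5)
Your argument is correct and complete; it is the standard verification of the pushout universal property via the coproduct and coequalizer universal properties in sequence. The paper itself gives no proof, simply citing the result as well known (from Ad\'amek--Herrlich--Strecker), and your argument is precisely the standard one being invoked there.
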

\section{Model Categories}

\begin{mydef}
	Let $\cat{C}$ be a category, and $f$, $g$ be morphisms in $\cat{C}$. We say that $f$ has the \emph{left lifting property} \wrt $g$, and $g$ has the \emph{right lifting property} \wrt $f$, if given any solid arrow diagram
	\begin{align}\label{eq:rclc}\begin{aligned}
		\xymatrix
		{
			x\ar[d]_f\ar[r]&u\ar[d]^g\\
			y\ar@{-->}[ur]^h\ar[r]&v
		}\end{aligned}
	\end{align}
	there is an arrow $h\colon y\to u$, such that both triangles commute.\\
\end{mydef}

\begin{mydef}\label{def:modcat}
A \emph{model category} is a category $\cat{M}$ together with three classes of morphisms: a class of \emph{weak equivalences} $W$, a class of \emph{fibrations} $F$, and a class of \emph{cofibrations} $C$, satisfying the following properties:
	\begin{enumerate}
		\item[M1] $\cat{M}$ is bicomplete.
		\item[M2] $W$ satisfies the 2--out--of--3 property.
		\item[M3] The classes $W$, $F$ and $C$ are closed under retracts.
		\item[M4] We call a map a \emph{trivial fibration} if it is a fibration and a weak equivalence, and a \emph{trivial cofibration} if it is a cofibration and a weak equivalence. Then trivial cofibrations have the left lifting property with respect to fibrations, and cofibrations have the left lifting property with respect to trivial fibrations.
		\item[M5] Every morphism $f$ in $\cat{M}$ has two functorial factorizations:
			\begin{enumerate}
				\item $f = qi$, where $i$ is a cofibrations and $q$ is a trivial fibration.
				\item $f=pj$, where $j$ s a trivial cofibration and $j$ is a fibration.
			\end{enumerate}
	\end{enumerate}
\end{mydef}

An object $x$ in a model category is called \emph{cofibrant} if the unique morphism $\emptyset\to x$ is a cofibration, and \emph{fibrant} if the unique morphism $x\to *$ is a fibration.

\begin{myex}
	Let $\mathbf{sSet}$ the category of simplicial sets. The \emph{Quillen model structure} is given as follows: let $f\colon X\to Y$ be a simplicial map. We say $f$ is a
	\begin{enumerate}
		\item weak equivalence, if $f$ is a weak homotopy equivalence, \ie $\real{f}$ is a weak homotopy equivalence in $\mathbf{Top}$;
		\item cofibration if $f$ is a monomorphism, \ie a levelwise injection;
		\item fibration if $f$ is a Kan fibration, \ie $f$ has the right lifting property with respect to all horn inclusions.
	\end{enumerate}
	We denote this model category by $\mathbf{sSet}_{\mathrm{Quillen}}$. Note that fibrant objects \wrt the Quillen model structure are exactly Kan complexes.
\end{myex}

To work with different model categories, we need a notion of morphisms between them. These are given by \emph{Quillen adjunctions}, whereas the related notion of equivalence between model categories is given by \emph{Quillen equivalences}.

\begin{mydef}
	Let $\cat{M}$, $\cat{N}$ be model categories. A \emph{Quillen adjunction} is a pair of adjoints $\adjct{F}{M}{N}{G}$ satisfying 
	\begin{enumerate}
		\item $F$ preserves cofibrations and trivial cofibrations and
		\item $G$ preserves fibrations and trivial fibrations.
	\end{enumerate}
	A Quillen adjunction is called a \emph{Quillen equivalence} if for all cofibrant objects $x\in\cat{M}$, and all fibrant objects $y\in\cat{N}$, a map $f\colon F(x)\to y$ is a weak equivalence if and only if $\phi(f)\colon x\to G(y)$ is a weak equivalence, where $\phi\colon \Hom_{\cat{M}}(F(x),y)\xrightarrow{\cong}\Hom_{\cat{N}}(x,G(y))$ is the usual isomorphism related to the adjunction $F\dashv G$.
\end{mydef}

\begin{mydef}
	Let $I$ be a class of maps in a category $\cat{C}$. A morphism $f$ in $\cat{C}$ is
	\begin{enumerate}
		\item \emph{$I$--injective}, if it has the \emph{right lifting property} with respect to all morphisms in $I$. We denote the class of $I$--injectives by $I\inj$.
		\item \emph{$I$--projective}, if it has the \emph{left lifting property} with respect to all morphisms in $I$. We denote the class of $I$--projectives by $I\proj$.
		\item an \emph{$I$--cofibration}, if it has the \emph{left lifting property} with respect to every $I$--injective morphism. We denote the class of $I$--cofibrations by $I\cof$.
		\item an \emph{$I$--fibration}, if it has the \emph{right lifting property} with respect to every $I$--projective morphism. We denote the class of $I$--fibrations by $I\fib$.
	\end{enumerate}

\end{mydef}

A particularly useful type of model categories are proper model categories. We will only give the definition here, and refer the reader to \cite[Chapter~13]{hirschhorn:mcl} for an introductory text.

\begin{mydef}
	A model category $\cat{M}$ is called
	\begin{enumerate}
		\item \emph{left proper} if every pushout of a weak equivalence along a cofibration is a weak equivalence,
		\item \emph{right proper} if every pullback of a weak equivalence along a fibration is a weak equivalence, and
		\item \emph{proper} if it is left and right proper.
	\end{enumerate}
\end{mydef}

\subsection{Cofibrantly Generated Model Categories}
The model structures we are working with are all cofibrantly generated, and since this property is an essential ingredient 
in the following proofs, we will give a brief recap of the necessary definitions and results.

\begin{myprop}
	Let $\cat{C}$ be a cocomplete category, $\lambda$ be an ordinal. A \emph{$\lambda$--sequence} in $\cat{C}$ is a functor $X\colon\lambda\to\cat{C}$ such that for every limit ordinal $\gamma < \lambda$, the induced map
	\begin{align*}
		\colim_{\beta<\gamma} X_\beta\to X_\gamma
	\end{align*}
	is an isomorphism. The \emph{composition} of a $\lambda$--sequence is the map $X_0\to\colim_{\beta<\lambda} X_\beta$. Moreover, given a class of maps $D$ in $\cat{C}$, a \emph{transfinite composition} of maps in $D$ is the composition of a $\lambda$--sequence $X\colon\lambda\to\cat{C}$, where every morphism $X_\beta\to X_{\beta+1}$ is an element of $D$.
\end{myprop}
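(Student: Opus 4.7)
The statement is almost entirely definitional; the only genuine assertion hiding inside it is the existence of a canonical comparison morphism $\colim_{\beta<\gamma}X_\beta\to X_\gamma$ whose invertibility is then imposed as the defining condition of a $\lambda$-sequence. So my ``proof'' would really amount to checking that this canonical map is well-defined, together with noting why the remaining pieces (composition, transfinite composition) make sense once this is in place.

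First I would regard the ordinal $\lambda$ as a poset, hence as a small category, so that a functor $X\colon\lambda\to\cat{C}$ consists of objects $X_\beta$ for $\beta<\lambda$ together with transition morphisms $X(\beta\le\beta')\colon X_\beta\to X_{\beta'}$ satisfying the evident functoriality. Fix a limit ordinal $\gamma<\lambda$. Since $\cat{C}$ is cocomplete, the diagram $(X_\beta)_{\beta<\gamma}$ admits a colimit $\colim_{\beta<\gamma}X_\beta$ together with structure maps $\iota_\beta\colon X_\beta\to\colim_{\beta<\gamma}X_\beta$. The transition morphisms $X(\beta\le\gamma)\colon X_\beta\to X_\gamma$ form a cocone on the same diagram: for $\beta'\le\beta<\gamma$, functoriality of $X$ gives $X(\beta\le\gamma)\circ X(\beta'\le\beta)=X(\beta'\le\gamma)$, which is exactly the cocone compatibility. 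The universal property of the colimit then produces a unique morphism $\colim_{\beta<\gamma}X_\beta\to X_\gamma$ factoring every $X(\beta\le\gamma)$ through the $\iota_\beta$; this is the morphism referred to in the statement, and requiring it to be an isomorphism for every limit $\gamma<\lambda$ is the actual content of ``$X$ is a $\lambda$-sequence''.

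For the composition, I would again use cocompleteness of $\cat{C}$ to form $\colim_{\beta<\lambda}X_\beta$ and take the structure map $\iota_0\colon X_0\to\colim_{\beta<\lambda}X_\beta$ as the composition of $X$. The notion of a transfinite composition of maps in $D$ is then obtained simply by restricting attention to those $\lambda$-sequences whose successor-step morphisms $X_\beta\to X_{\beta+1}$ all lie in $D$, so there is nothing further to prove.

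There is no real obstacle here; the whole argument is the universal property of colimits applied to the cocone of transition maps. The only thing worth being careful about is the functoriality check that the transition maps $X(\beta\le\gamma)$ do assemble into a cocone, which is automatic from $X$ being a functor on the poset $\lambda$.
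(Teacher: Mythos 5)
Your proposal is correct: the statement is a definition dressed up as a proposition (the paper itself offers no proof), and the only mathematical content is the existence of the canonical comparison map $\colim_{\beta<\gamma}X_\beta\to X_\gamma$, which you rightly obtain from the universal property of the colimit applied to the cocone of transition maps $X(\beta\le\gamma)$. Nothing further is needed, and your treatment of the composition and of transfinite compositions as mere restrictions of the definition is exactly right.
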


It is useful to note that any coproduct in a category $\cat{C}$ can be obtained as a transfinite composition of pushouts, due to the following proposition \cite[Proposition~10.2.7]{hirschhorn:mcl}:

\begin{myprop}\label{prop:dirprodlim}
	If $\cat{C}$ is a category, $S$ a set, and $f_s\colon x_s\to y_s$ a map in $\cat{C}$ for every $s$ in $S$, then the coproduct $\amalg f_s\colon\amalg x_s\to\amalg y_s$ is a transfinite composition of pushouts of the $f_s$.
\end{myprop}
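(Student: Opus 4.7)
The plan is to well-order $S$ and replace each $x_s$ by $y_s$ one at a time via pushouts. First I would choose an ordinal $\lambda$ together with a bijection $\lambda\cong S$, writing the elements of $S$ as $s_\alpha$ for $\alpha<\lambda$. Then I would construct a $\lambda$-sequence $X\colon\lambda\to\cat{C}$ recursively: set $X_0=\coprod_{s\in S}x_s$; at a successor ordinal $\alpha+1<\lambda$, let $X_{\alpha+1}$ be the pushout of $f_{s_\alpha}$ along the coproduct inclusion $\iota_\alpha\colon x_{s_\alpha}\to X_\alpha$; and at a limit ordinal $\gamma<\lambda$, set $X_\gamma=\colim_{\alpha<\gamma}X_\alpha$. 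The last clause makes $X$ into a $\lambda$-sequence in the sense of the preceding proposition, and by construction every successor step is a pushout of one of the given maps $f_s$.

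Next I would establish by transfinite induction that at each stage $\beta\leq\lambda$ the object $X_\beta$ is canonically isomorphic to
\[
\Bigl(\coprod_{\alpha<\beta}y_{s_\alpha}\Bigr)\amalg\Bigl(\coprod_{\beta\leq\alpha<\lambda}x_{s_\alpha}\Bigr),
\]
via an isomorphism that identifies $\iota_\beta$ with the canonical inclusion of $x_{s_\beta}$. The successor step is immediate from the universal property of the pushout: pushing out $f_{s_\alpha}$ along $\iota_\alpha$ substitutes $y_{s_\alpha}$ for $x_{s_\alpha}$ and leaves the remaining summands untouched. The limit step requires one to interchange a filtered colimit with a coproduct, using that a coproduct over a well-ordered set is itself the filtered colimit of its initial-segment sub-coproducts.

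Applying the stage-$\lambda$ identification yields $\colim_{\beta<\lambda}X_\beta\cong\coprod_{s\in S}y_s$, and tracing the construction summand by summand shows that the composition $X_0\to\colim_{\beta<\lambda}X_\beta$ restricts on the $s_\alpha$-summand of $X_0$ to $f_{s_\alpha}$, so the composition agrees with $\amalg f_s\colon\amalg x_s\to\amalg y_s$, as required.

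I expect the main obstacle to be the limit-ordinal step of the inductive identification of $X_\beta$, since an arbitrary cocomplete $\cat{C}$ provides no a priori compatibility between colimits over $\alpha<\gamma$ and the coproducts defining each $X_\alpha$; the argument has to rest on the categorical fact that a coproduct indexed by a well-ordered set is the filtered colimit of the coproducts over its proper initial segments.
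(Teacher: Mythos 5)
Your argument is correct and is essentially the proof of the result the paper cites (Hirschhorn, Proposition~10.2.7): well-order $S$, replace $x_{s_\alpha}$ by $y_{s_\alpha}$ one summand at a time via pushouts along the coproduct inclusions, and identify each stage with the mixed coproduct $\bigl(\coprod_{\alpha<\beta}y_{s_\alpha}\bigr)\amalg\bigl(\coprod_{\alpha\ge\beta}x_{s_\alpha}\bigr)$. The limit-ordinal step you flag as the main obstacle is in fact unproblematic: writing $X_\beta=\coprod_s G_s(\beta)$ for the evident functors $G_s\colon\lambda\to\cat{C}$ (constant at $x_s$, then at $y_s$, with $f_s$ as the one nontrivial arrow), the identification follows from the completely general fact that colimits commute with colimits, with no filteredness or extra hypotheses on $\cat{C}$ needed.
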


Additionally, transfinite compositions allow us to introduce the notion of cell complexes:

\begin{mydef}
	Let $I$ be a class of morphisms in a cocomplete category. A \emph{relative $I$--cell complex} is a transfinite composition of pushouts along elements of $I$. An object $x\in\cat{C}$ is an \emph{$I$--cell complex}, if $0\to x$ is a relative $I$--cell complex.
\end{mydef}

$I$--cell complexes enjoy the following useful property (cf.\ \cite[Lemma~2.1.10]{hovey:modcat})

\begin{mylem}\label{lem:icellcof}
	Let $I$ be a class of morphisms in a category $\cat{C}$ with all small colimits. Then $I\mathrm{-cell}\subseteq I\cof$.
\end{mylem}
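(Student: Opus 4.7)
The plan is to establish that the class $I\cof$ contains $I$ and is closed under both pushouts and transfinite compositions; once that is done, the inclusion $I\cell \subseteq I\cof$ is immediate, since a relative $I$--cell complex is by definition a transfinite composition of pushouts of maps in $I$. The inclusion $I \subseteq I\cof$ is tautological: for $f \in I$ and $g \in I\inj$, the statement that $g$ has the right lifting property with respect to $f$ is literally the statement that $f$ has the left lifting property with respect to $g$.

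For closure under pushouts, suppose $f \colon A \to B$ lies in $I\cof$ and $f' \colon A' \to B'$ is a pushout of $f$ along some map $A \to A'$. Given a lifting problem with $f'$ on the left and $g \in I\inj$ on the right, precomposing the top and bottom maps of the square with the pushout maps $A \to A'$ and $B \to B'$ yields a lifting problem with $f$ on the left and $g$ on the right, which admits a solution $\ell \colon B \to U$ since $f \in I\cof$. The map $\ell$, together with the given top map $A' \to U$, assembles via the universal property of the pushout into the required lift $B' \to U$; commutativity of the necessary triangles is routine diagram chasing.

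The central step is closure under transfinite composition, handled by transfinite induction. Let $X \colon \lambda \to \cat{C}$ be a $\lambda$--sequence whose successor maps $X_\beta \to X_{\beta+1}$ all lie in $I\cof$, with composition $f \colon X_0 \to \colim_{\beta < \lambda} X_\beta$. Given a lifting problem with $f$ on the left and $g \in I\inj$ on the right, specified by maps $u_0 \colon X_0 \to U$ and $v \colon \colim_{\beta < \lambda} X_\beta \to V$, I build compatible lifts $u_\beta \colon X_\beta \to U$ by induction on $\beta$. At successor stages $u_{\beta+1}$ is produced by the left lifting property of $X_\beta \to X_{\beta+1}$ against $g$, chosen so as to extend $u_\beta$; at limit ordinals $\gamma$, the defining isomorphism $X_\gamma \cong \colim_{\beta < \gamma} X_\beta$ combined with compatibility of the already constructed $u_\beta$ produces $u_\gamma$ via the colimit's universal property. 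One last application of that universal property, now at $\lambda$, yields the required lift $\colim_{\beta < \lambda} X_\beta \to U$.

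The principal obstacle is the bookkeeping in the transfinite induction---in particular, verifying at each successor stage that the chosen $u_{\beta+1}$ really does extend $u_\beta$, so that at the next limit stage the family $\{u_\beta\}_{\beta<\gamma}$ genuinely forms a compatible cone under the diagram and the colimit universal property is applicable. This coherence follows directly from how $u_{\beta+1}$ is extracted from the lifting square at the pushout defining $X_\beta \to X_{\beta+1}$, but it needs to be recorded explicitly as part of the inductive hypothesis rather than being a purely formal consequence.
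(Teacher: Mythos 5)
Your proof is correct and is precisely the standard argument: the paper does not prove this lemma itself but cites \cite[Lemma~2.1.10]{hovey:modcat}, whose proof is exactly your strategy of showing $I\subseteq I\cof$ and that $I\cof$ is closed under pushouts and transfinite compositions. The one wording slip worth noting is that in the transfinite-composition step the successor maps are merely assumed to lie in $I\cof$ (not to be pushouts), and the compatibility $u_{\beta+1}\circ(X_\beta\to X_{\beta+1})=u_\beta$ is simply the upper triangle of the chosen lifting square; this does not affect the argument.
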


Before we are able to define cofibrantly generated model categories, we need to introduce the notion of smallness, and clarify what it means for a class of morphisms to permit the small object argument.

\begin{mydef}
	Let $\cat{C}$ be a cocomplete category, $\cat{D}\subseteq\cat{C}$. If $\kappa$ is a cardinal, then an object $x\in\cat{C}$ is \emph{$\kappa$--small relative to $\cat{D}$} if for every regular cardinal $\lambda\ge\kappa$ and every $\lambda$--sequence 
	\begin{equation*}
		X_0\to X_1\to\dotsb\to X_\beta\to\dotsb\qquad (\beta<\lambda)
	\end{equation*}
	in $\cat{C}$, such that $X_\beta\to X_{\beta+1}$ is in $\cat{D}$ for every $\beta$ with $\beta+1<\lambda$, the map of sets 
	\begin{equation*}
		\colim_{\beta<\lambda}\cat{C}(x,X_\beta)\to\cat{C}(x,\colim_{\beta<\lambda} X_\beta)
	\end{equation*}
	is an isomorphism. We say $x$ is \emph{small relative to $\cat{D}$} if it is $\kappa$--small for some ordinal $\kappa$ and we say $x$ is \emph{small} if it is small relative to $\cat{C}$.
\end{mydef}

In $\Cat$, every object is small and there is an easy way to find the an ordinal $\kappa$ such that the conditions from the previous definition are satisfied (cf.\ \cite[Proposition~7.6]{fpp:mscsdc}).

\begin{myprop}\label{prop:ksmall}
	Every category $\cat{C}\in\Cat$ is $\kappa$--small, where 
	\begin{align*}
		\kappa = |\cat{C}^{(0)}|+|\cat{C}^{(1)}|+|\cat{C}^{(1)}\btimes{s}{t}\cat{C}^{(1)}|\text{.}
	\end{align*}
\end{myprop}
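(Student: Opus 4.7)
The plan is to exploit that a $\lambda$-sequence for an infinite regular cardinal $\lambda$ is a filtered diagram, and that filtered colimits in $\Cat$ are created by the two forgetful functors $\cat{C}\mapsto\cat{C}^{(0)}$ and $\cat{C}\mapsto\cat{C}^{(1)}$ to $\mathbf{Set}$. Writing $X_\infty=\colim_{\beta<\lambda}X_\beta$, this yields bijections $X_\infty^{(0)}\cong\colim_{\beta<\lambda}X_\beta^{(0)}$ and $X_\infty^{(1)}\cong\colim_{\beta<\lambda}X_\beta^{(1)}$, with source, target, identities and composition inherited from the coherent system. Once this is in place, the statement reduces to a cardinal-arithmetic bookkeeping that uses regularity of $\lambda$ together with the bounds $|\cat{C}^{(0)}|,|\cat{C}^{(1)}|,|\cat{C}^{(1)}\btimes{s}{t}\cat{C}^{(1)}|\le\kappa<\lambda$.

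For surjectivity of the canonical map $\colim_{\beta<\lambda}\Cat(\cat{C},X_\beta)\to\Cat(\cat{C},X_\infty)$, fix a functor $F\colon\cat{C}\to X_\infty$. For each $x\in\cat{C}^{(0)}$ and each $f\in\cat{C}^{(1)}$ choose stages $\beta_x,\beta_f<\lambda$ at which $F(x)$ and $F(f)$ admit preimages; since there are at most $|\cat{C}^{(0)}|+|\cat{C}^{(1)}|\le\kappa<\lambda$ such choices and $\lambda$ is regular, their supremum is some $\beta_0<\lambda$ at which all preimages are simultaneously realized. A further advance to a stage $<\lambda$ aligns the source/target relations (at most $2|\cat{C}^{(1)}|\le\kappa$ equalities). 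Finally, every identity axiom $F(\id_x)=\id_{F(x)}$ and every composition axiom $F(g\circ f)=F(g)\circ F(f)$ is an equality in the filtered set-colimit $X_\infty^{(1)}$, hence already valid at some stage $<\lambda$; their total number is bounded by $|\cat{C}^{(0)}|+|\cat{C}^{(1)}\btimes{s}{t}\cat{C}^{(1)}|\le\kappa$, so regularity produces a single common stage $\beta^*<\lambda$ at which all of them hold. The chosen data then define a functor $\cat{C}\to X_{\beta^*}$ projecting to $F$ in $\Cat(\cat{C},X_\infty)$.

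Injectivity is analogous: if $F_\beta,G_\beta\colon\cat{C}\to X_\beta$ agree after post\-com\-position with $X_\beta\to X_\infty$, then each of the at most $|\cat{C}^{(0)}|+|\cat{C}^{(1)}|\le\kappa$ equalities $F_\beta(x)=G_\beta(x)$ and $F_\beta(f)=G_\beta(f)$ is witnessed at some stage below $\lambda$, and regularity merges these into a single $\beta^*\ge\beta$ at which $F_{\beta^*}=G_{\beta^*}$. The delicate point, and the reason $\kappa$ is defined as it is, is the composition axiom: it produces one equality to be satisfied for each composable pair, so the pullback term $|\cat{C}^{(1)}\btimes{s}{t}\cat{C}^{(1)}|$ is exactly what guarantees that all composition relations can be absorbed at a single stage $<\lambda$ via regularity. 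Without this summand the argument would only show smallness with respect to the stricter diagrams where the $X_\beta\to X_{\beta+1}$ are, say, monomorphisms, rather than for arbitrary $\lambda$-sequences.
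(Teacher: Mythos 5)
The paper does not actually prove this proposition; it is quoted from \cite[Proposition~7.6]{fpp:mscsdc}, so there is no internal argument to compare against. Your route is the standard (and correct) one: for infinite regular $\lambda$ the ordinal $\lambda$ is a directed poset, so a $\lambda$-sequence is a filtered diagram, and Proposition~\ref{prop:colimfilcat} (which you re-derive in your opening paragraph) gives the element-wise description of $X_\infty$ on objects and on hom-sets; the surjectivity and injectivity bookkeeping you then carry out over that description is sound as far as it goes.

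The one genuine gap is the strict inequality $\kappa<\lambda$ that you invoke at every application of regularity. The definition of $\kappa$-smallness in the paper quantifies over all regular $\lambda\ge\kappa$, so the case $\lambda=\kappa$ with $\kappa$ infinite and regular must also be covered, and there your supremum arguments break down: a family of $\kappa$ ordinals below $\kappa$ can be cofinal in $\kappa$. This is not a removable defect of the write-up, since the statement itself fails at that cardinal: for the discrete category $\cat{C}$ on a countable set one has $\kappa=\aleph_0$, and the $\omega$-sequence of finite discrete categories $\{0,\dotsc,n\}$ has colimit the discrete category on $\mathbb{N}$, whose identity functor factors through no finite stage, so $\colim_{n<\omega}\Cat(\cat{C},X_n)\to\Cat(\cat{C},\colim_{n<\omega} X_n)$ is not surjective. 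What your argument actually establishes is smallness for every regular $\lambda>\kappa$, i.e.\ $\kappa^+$-smallness (which coincides with the stated claim exactly when $\kappa$ is finite --- the only case the paper uses, namely for the finite posets occurring as domains of $I$ and $J$). You should either state your conclusion in that form or flag the discrepancy with the quoted $\kappa$. A final cosmetic point: your closing justification for the summand $|\cat{C}^{(1)}\btimes{s}{t}\cat{C}^{(1)}|$ overstates its role; since $|\cat{C}^{(1)}\btimes{s}{t}\cat{C}^{(1)}|\le|\cat{C}^{(1)}|^{2}$, this term never changes the value of $\kappa$ for infinite $\cat{C}$ and is irrelevant for finite $\cat{C}$ --- it is simply a convenient bound on the number of composition relations to be absorbed at a single stage, not the reason the proposition requires this particular cardinal.
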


In the context of cofibrantly generated model categories, it is usually enough for an object to be small \wrt to certain sets of morphisms.

\begin{mydef}
	Let $\cat{C}$ be a cocomplete category, and $I\subseteq\cat{C}^{(1)}$ be a set. An object is \emph{small relative to $I$} if it is small relative to the category of $I$--cell complexes and we say that $I$ \emph{permits the small object argument} if the domains of elements of $I$ are small relative to $I$.
\end{mydef}

Given a set of morphisms that permits the small object argument, a slightly stronger version of Lemma~\ref{lem:icellcof} holds (cf.\ \cite[Lemma~10.5.23]{hirschhorn:mcl}).

\begin{myprop}
	Let $\cat{C}$ be a cocomplete category and $I$ be a set of morphisms that permits the small object argument. Then the class of $I$--cofibrations equals the class of retracts of relative $I$--cell complexes.
\end{myprop}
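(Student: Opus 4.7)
The plan is to prove the two inclusions separately.

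For the easy direction, that every retract of a relative $I$-cell complex is an $I$-cofibration, I would combine Lemma~\ref{lem:icellcof} (which gives $I\cell\subseteq I\cof$) with the standard observation that $I\cof$ is closed under retracts. Closure under retracts is a formal property of any class defined by a left lifting property: given a retract diagram exhibiting $f$ as a retract of some $\phi\in I\cof$ and a lifting problem for $f$ against an $I$-injective $g$, one transports the lifting problem along the retract to obtain a lifting problem for $\phi$, solves it using the lifting property of $\phi$, and pushes the solution back via the section of the retract. This is a routine diagram chase.

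For the converse, the essential tool is the small object argument, whose hypotheses on $I$ are exactly those we have assumed. Given an arbitrary $I$-cofibration $f\colon A\to B$, the small object argument produces a factorisation $f = p\circ i$ in which $i\colon A\to X$ is a relative $I$-cell complex and $p\colon X\to B$ is $I$-injective. Since $f\in I\cof$, it has the left lifting property against $p$, so the square
\begin{align*}
\xymatrix{
A \ar[r]^i \ar[d]_f & X \ar[d]^p \\
B \ar@{-->}[ur]^h \ar[r]_{\id_B} & B
}
\end{align*}
admits a filler $h\colon B\to X$ with $hf = i$ and $ph = \id_B$. Assembling $(\id_A,f)$ on the left and $(h,\id_B)$ on the right produces a retract diagram exhibiting $f$ as a retract of the relative $I$-cell complex $i$.

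The main ``obstacle'' here is really external: one needs the small object argument as a background result, and it is precisely the smallness hypothesis on the domains of $I$ that makes the underlying transfinite construction terminate. Once the small object argument is invoked as a black box, the proposition reduces to the two short diagram chases described above.
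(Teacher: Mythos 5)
Your proof is correct and is the standard retract argument: the paper itself gives no proof of this proposition but cites it from Hirschhorn (Lemma~10.5.23), where exactly this argument is used. Both halves of your proposal --- the inclusion $I\cell\subseteq I\cof$ from Lemma~\ref{lem:icellcof} combined with closure of $I\cof$ under retracts, and the factor-via-the-small-object-argument-then-lift trick producing the retract diagram --- are the intended proof, and the lift $h$ you construct does exhibit $f$ as a retract of the relative $I$--cell complex $i$.
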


We are now ready to give the definition of a cofibrantly generated model category

\begin{mydef}
	A \emph{cofibrantly generated model category} is a model category $\cat{M}$ such that:
	\begin{enumerate}
		\item There exists a set $I\subseteq\cat{M}^{(1)}$, called the set of \emph{generating cofibrations}, that permits the small object argument and satisfies $F\cap W = I\inj$.
		\item There exists a set $J\subseteq\cat{M}^{(1)}$, called the set of \emph{generating trivial cofibrations}, that permits the small object argument and satisfies $F = J\inj$.
	\end{enumerate}
\end{mydef}

The following propositions, which are \cite[Proposition~11.2.1]{hirschhorn:mcl} and \cite[Proposition~10.5.16]{hirschhorn:mcl} should give some motivation why a cofibrantly generated model category is defined the way it is.

\begin{myprop}\label{prop:cgmccf}
	Let $\cat{M}$ be a cofibrantly generated model category with generating cofibrations $I$ and generating trivial cofibrations $J$. Then:
	\begin{enumerate}
		\item The class of cofibrations of $\cat{M}$ equals the class of retracts of relative $I$--cell complexes, which equals the class of $I$--cofibrations.
		\item The class of trivial fibrations of $\cat{M}$ equals the class of $I$--injectives.
		\item The class of trivial cofibrations of $\cat{M}$ equals the class of retracts of relative $J$--cell complexes, which equals the class of $J$--cofibrations.
		\item The class of fibrations of $\cat{M}$ equals the class of $J$--injectives.
	\end{enumerate}
\end{myprop}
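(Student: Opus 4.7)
The plan is to deduce all four statements from a short combination of the model-category axioms M3, M4, M5, the defining properties of a cofibrantly generated model category, and the previous proposition identifying $I$-cofibrations (resp.\ $J$-cofibrations) with retracts of relative $I$-cell complexes (resp.\ $J$-cell complexes) when the sets permit the small object argument. Parts (ii) and (iv) are essentially bookkeeping on definitions: the hypothesis of being cofibrantly generated gives $F\cap W = I\inj$ and $F = J\inj$, and these are precisely the second claim of (ii) and (iv). The content is therefore concentrated in (i) and (iii), which I would prove by the same retract argument in two parallel passes.

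For (i), I would first apply the previous proposition to $I$, which permits the small object argument by the very definition of cofibrantly generated, to obtain $I\cof = $ retracts of relative $I$-cell complexes. It then suffices to identify cofibrations with $I\cof$. For the easy inclusion, M4 says cofibrations have the left lifting property with respect to trivial fibrations, which by (ii) coincide with $I\inj$; hence every cofibration is in $I\cof$. For the reverse inclusion, given $f\in I\cof$, apply the functorial factorization M5(a) to write $f = q\circ i$ with $i$ a cofibration and $q$ a trivial fibration. Since $q\in I\inj$ by (ii) and $f\in I\cof$, we obtain a lift in the resulting square, which presents $f$ as a retract of $i$ in the arrow category. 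By M3 the class of cofibrations is closed under retracts, so $f$ is a cofibration.

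Part (iii) is entirely analogous, replacing $I$ by $J$, M5(a) by M5(b), and (ii) by (iv): the previous proposition identifies $J\cof$ with retracts of relative $J$-cell complexes; M4 combined with (iv) shows that every trivial cofibration lies in $J\cof$; and for the converse, given $f\in J\cof$, factor $f = p\circ j$ with $j$ a trivial cofibration and $p$ a fibration. Since $p\in J\inj$ by (iv), the lifting property of $f$ exhibits $f$ as a retract of $j$, and M3 applied to both the classes of cofibrations and weak equivalences (hence to trivial cofibrations) forces $f$ to be a trivial cofibration.

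I do not anticipate a genuine obstacle here: every step is an application of a hypothesis already in place, and the only non-formal ingredient is the classical retract trick, which is the same in both pairs. The cleanest way to organise the write-up is probably to prove (ii) and (iv) first as direct translations of the definition, then state the retract argument once as a lemma-style observation and apply it twice to deduce (i) and (iii).
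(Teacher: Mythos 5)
Your proposal is correct. The paper gives no proof of this proposition at all --- it is quoted directly from Hirschhorn as \cite[Proposition~11.2.1]{hirschhorn:mcl} --- and your argument (reading (ii) and (iv) off the definition $F\cap W=I\inj$, $F=J\inj$, then running the classical retract trick through the factorizations M5(a) and M5(b) to get (i) and (iii)) is precisely the standard proof the paper defers to, with no gaps.
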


\begin{myprop}[The small object argument]
	Let $\cat{C}$ be a small category and $I\subseteq\cat{C}^{(1)}$. Assume that $I$ permits the small object argument. Then there is a functorial factorization of every map in $\cat{C}$ into a relative $I$--cell complex followed by an $I$--injective.
\end{myprop}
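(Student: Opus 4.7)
The plan is to build, for each morphism $f\colon X\to Y$, a transfinite sequence $X = Z_0 \to Z_1 \to \dotsb$ together with compatible maps $g_\beta\colon Z_\beta \to Y$, where each successor step is a pushout of a coproduct of maps in $I$. First I would fix a regular cardinal $\kappa$ such that every domain of a map in $I$ is $\kappa$--small relative to relative $I$--cell complexes; such a $\kappa$ exists because $I$ permits the small object argument. Then at stage $\beta$, let $S_\beta$ be the set of all commutative squares whose left edge is some $i_s\colon A_s \to B_s$ in $I$, top edge some $u_s\colon A_s \to Z_\beta$, right edge $g_\beta$, and bottom edge $v_s\colon B_s \to Y$. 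Form $Z_{\beta+1}$ as the pushout of $\coprod_{s\in S_\beta} A_s \to Z_\beta$ (induced by the $u_s$) along $\coprod_{s\in S_\beta} i_s$, and define $g_{\beta+1}\colon Z_{\beta+1} \to Y$ by the universal property from $g_\beta$ and the $v_s$. At limit ordinals take $Z_\gamma = \colim_{\beta<\gamma} Z_\beta$, and finally set $Z = \colim_{\beta<\kappa} Z_\beta$ with $g\colon Z \to Y$ induced from the family $(g_\beta)$; then $f = g\circ (X \to Z)$ is the candidate factorisation.

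Two things must then be verified. First, the left factor $X \to Z$ is a relative $I$--cell complex: each successor map $Z_\beta \to Z_{\beta+1}$ is a pushout of a coproduct of elements of $I$, and by Proposition~\ref{prop:dirprodlim} such a pushout is itself a transfinite composition of pushouts along individual maps of $I$. Concatenating all these into a single (longer) transfinite composition exhibits $X \to Z$ as a relative $I$--cell complex.

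The main obstacle, and the place where the smallness hypothesis is genuinely used, is showing that $g$ is $I$--injective. Given any lifting problem whose left edge is $i\colon A \to B$ in $I$ and right edge is $g$, $\kappa$--smallness of $A$ relative to $I$--cell complexes ensures that the top map $A \to Z = \colim_{\beta<\kappa} Z_\beta$ factors through some $Z_\beta$ via a map $u'\colon A \to Z_\beta$. The resulting square then belongs to the indexing set $S_\beta$, so the corresponding coproduct inclusion provides a canonical map $B \to Z_{\beta+1}$ that splits the square; composing with $Z_{\beta+1} \to Z$ yields the required diagonal lift. Functoriality is built into the construction: a morphism in the arrow category of $\cat{C}$ sends commutative squares over $f$ to commutative squares over $f'$, so that the index sets $S_\beta$ transform covariantly, and by transfinite induction together with the naturality of colimits one obtains a natural map between the two factorisations.
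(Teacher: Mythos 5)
Your construction is precisely the standard small object argument and agrees with the factorization $x\to E_\infty\to y$ sketched in the paper (which itself only outlines the construction and defers the full proof to Hirschhorn): successor stages are pushouts of coproducts of maps in $I$ indexed by all lifting squares, and $\kappa$--smallness of the domains relative to $I$--cell is used exactly where it must be, to factor the top map of a lifting problem through some finite stage $Z_\beta$. The argument is correct and complete.
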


Note that given a cofibrantly generated model category, by Proposition~\ref{prop:cgmccf} the factorizations we obtain by applying the small object argument with respect to the sets $I$ and $J$ yield exactly the factorizations required in M5 of Definition~\ref{def:modcat}. Although we will not give a full proof of the small object argument, we want to give a short sketch of how the factorization works and fix some notation. Let $\cat{C}$ be a small category, and $I\subseteq\cat{C}$ be a set that permits the small object argument. Given any morphism $f\colon x\to y$ in $\cat{C}$, we obtain a factorization $x\to E_\infty\to y$ where $x\to E_\infty$ is the transfinite composition of a $\lambda$--sequence 
\begin{align*}
	x=E_0\to E_1\to\dotsb\to E_\beta\to E_{\beta+1}\to\dotsb\qquad (\beta<\lambda)\text{,}
\end{align*}
where the $E_\beta$ are obtained by pushouts of coproducts of elements of $I$.\\
We will end the discussion of cofibrantly generated model categories by giving two theorems. The first one allows us to establish a model structure on a category from the knowledge of the generating cofibrations, generating trivial cofibrations and weak equivalences (cf.\ \cite[Prop.~11.3.1]{hirschhorn:mcl}). The second is commonly known as \emph{Kan's Lemma on Transfer} and allows us to transport a model structure along an adjunction (cf. \cite[Theorem~11.3.2]{hirschhorn:mcl}).

\begin{myprop}\label{thm:mswij}
	Let $\cat{C}$ be a bicomplete category. Suppose $\cat{W}$ is a subcategory and that $I,J\subseteq\cat{C}^{(1)}$ are sets. Then $\cat{C}$ is a cofibrantly generated model category with generating cofibrations $I$ and generating trivial cofibrations $J$ and subcategory of weak equivalences $\cat{W}$, if and only if the following conditions hold:
	\begin{enumerate}
		\item $\cat{W}$ satisfies the 2--out--of--3 property and is closed under retracts.
		\item The domains of $I$ are small relative to $I$--cell.
		\item The domains of $J$ are small relative to $J$--cell.
		\item $J\mathrm{-cell}\subseteq\cat{W}\cap I\mathrm{-cof}$.
		\item $I\mathrm{-inj}\subseteq\cat{W}\cap J\mathrm{-inj}$.
		\item $\cat{W}\cap I\mathrm{-cof}\subseteq J\mathrm{-cof}$ or $\cat{W}\cap J\mathrm{-inj}\subseteq I\mathrm{-inj}$.
	\end{enumerate}
\end{myprop}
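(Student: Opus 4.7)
My plan is to prove the only-if direction by unpacking the definitions, and the if direction by constructing the model structure with $F := J\inj$, $C := I\cof$, and $\cat{W}$ as given, producing the two functorial factorisations via the small object argument and matching them against the axiomatic classes using conditions (iv)--(vi).

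For the forward direction, condition (i) is M2 together with the retract-closure part of M3, and conditions (ii) and (iii) are exactly the content of ``$I$ (resp.\ $J$) permits the small object argument''. Condition (iv) follows from Lemma~\ref{lem:icellcof} combined with Proposition~\ref{prop:cgmccf}(iii), which places $J\cell$ inside the trivial cofibrations, hence inside $\cat{W}\cap I\cof$. Conditions (v) and (vi) are reformulations of the identifications ``trivial fibrations $= I\inj$'' and ``trivial cofibrations $= J\cof$'' from Proposition~\ref{prop:cgmccf}; in particular the first alternative of (vi) is forced in any cofibrantly generated model category.

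For the reverse direction, M1 is given, M2 is contained in (i), and M3 reduces to the retract-closure of $\cat{W}$ (in (i)) together with that of $F = J\inj$ and $C = I\cof$, which is automatic for any class defined by a lifting property. For M5 I invoke the small object argument twice using (ii) and (iii), factoring any map as $qi$ with $i\in I\cell\subseteq C$ and $q\in I\inj$, and as $pj$ with $j\in J\cell$ and $p\in J\inj = F$. Condition (iv) then places $j$ in $\cat{W}\cap C$ and condition (v) places $q$ in $\cat{W}\cap F$, so once M4 is established these two factorisations are the ones demanded by M5.

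The main obstacle is M4, which requires the intrinsic classes $\cat{W}\cap C$ and $\cat{W}\cap F$ to coincide with $J\cof$ and $I\inj$ respectively. One inclusion of each is immediate from (iv) and (v); the converses are the point of (vi) via the classical retract argument. Assuming, say, the first alternative $\cat{W}\cap I\cof\subseteq J\cof$, given $g\in\cat{W}\cap J\inj$ I factor $g = qi$ with $i\in I\cell\subseteq I\cof$ and $q\in I\inj$; condition (v) places $q$ in $\cat{W}$, so $i\in\cat{W}$ by 2-out-of-3 and $i\in J\cof$ by the alternative we assumed; the lift of $i\in J\cof$ against $g\in J\inj$ then exhibits $g$ as a retract of $q\in I\inj$, so $g\in I\inj$ because classes of injectives are closed under retracts. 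The case where only the second alternative of (vi) holds is handled symmetrically, applying the small object argument with $J$ to $j\in\cat{W}\cap I\cof$ and exhibiting it as a retract of an element of $J\cell$. Once $\cat{W}\cap C = J\cof$ and $\cat{W}\cap F = I\inj$ are in hand, M4 is the tautology that $I\cof$ lifts against $I\inj$ and $J\cof$ against $J\inj$, completing the proof.
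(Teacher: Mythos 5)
Your proof is correct and is essentially the standard argument for Kan's recognition theorem; the paper itself gives no proof, simply citing \cite[Proposition~11.3.1]{hirschhorn:mcl}, and your argument is the one found there (and in Hovey's Theorem~2.1.19). The only step you compress slightly is the inclusion $J\cof\subseteq\cat{W}\cap I\cof$, which is not literally immediate from (iv) but requires the characterization of $J\cof$ as retracts of relative $J$--cell complexes together with retract--closure of $\cat{W}\cap I\cof$ --- both of which you have available, so nothing is missing.
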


The important example of a cofibrantly generated model category in the context of this paper is the Thomason model structure on $\Cat$. Let $\sd\colon\sSet\to\sSet$ be the (barycentric) subdivision functor, and $\operatorname{Ex}\colon\sSet\to\sSet$ be its right adjoint (cf. \cite{kan:oncsscomp}).\\
Let furthermore $\nerve\colon\Cat\to\sSet$ denote the nerve functor, and $\tau_1\colon\sSet\to\Cat$ its right adjoint. Then we have an adjunction $\thol\colon\sSet\leftrightarrows\Cat:\!\thor$. Define
\begin{align*}
	W& := \left\{f\in\Cat^{(1)}\middle|\nerve(f)\text{ is a weak equivalence}\right\}\text{,}\\
	I& := \left\{\thol\partial\stdsimp{n}\to\thol\stdsimp{n}\middle| n\ge0\right\}\text{,}\\
	J& := \left\{\thol\horn{n}{k}\to\thol\stdsimp{n}\middle| n\ge0, n\ge k\ge 0\right\}\text{.}
\end{align*}
If we denote by $\cat{W}$ the wide subcategory of $\sSet$ satisfying $\cat{W}^{(0)}=W$, then $\cat{W}$, $I$ and $J$ satisfy the conditions of Theorem~\ref{thm:mswij} and define a models structure on $\Cat$, which is known as the \emph{Thomason model structure} and has the property that the adjunction $\thol\colon\sSet\leftrightarrows\Cat:\!\thor$ is a Quillen equivalence with respect to the Quillen model structure on $\sSet$.\\

\begin{myprop}[Kan's Lemma on Transfer]\label{prop:liftmodcat}
	Let $\cat{M}$ be a cofibrantly generated model category with generating cofibrations $I$ and generating trivial cofibrations $J$. Let $\cat{N}$ be a category that is closed under small limits and colimits and let $\adjct{F}{M}{N}{U}$ be a pair of adjoint functors. Define $FI:=\{Fu|u\in I\}$ and $FJ:=\{Fv|v\in J\}$. If
	\begin{enumerate}
		\item $FI$ and $FJ$ permit the small object argument and
		\item $U$ takes relative $FJ$--cell complexes to weak equivalences,
	\end{enumerate}
	then there is a cofibrantly generated model category structure on $\cat{N}$ where $FI$ is a set of generating cofibrations, $FJ$ is a set of generating trivial cofibrations, and the weak equivalences are the maps that $U$ takes into weak equivalences in $\cat{M}$.
	Furthermore, with respect to this model structure, $F\dashv U$ is a Quillen adjunction.
\end{myprop}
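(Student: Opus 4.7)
The plan is to verify the hypotheses of the recognition theorem (Proposition~\ref{thm:mswij}) for the proposed data on $\cat{N}$: take $FI$ as generating cofibrations, $FJ$ as generating trivial cofibrations, and $\cat{W}$ the wide subcategory whose morphisms are those $f\in\cat{N}^{(1)}$ with $Uf$ a weak equivalence in $\cat{M}$. The workhorse throughout is the adjunction lifting identity: $g\in\cat{N}^{(1)}$ has the RLP with respect to $Ff$ if and only if $Ug$ has the RLP with respect to $f$. Applied to the generating sets, this yields the two characterizations
\begin{align*}
	FI\inj &= \{g\in\cat{N}^{(1)}\mid Ug\in I\inj\}\text{ and}\\
	FJ\inj &= \{g\in\cat{N}^{(1)}\mid Ug\in J\inj\}\text{,}
\end{align*}
so that $FI$-injectives are precisely the maps sent by $U$ to trivial fibrations of $\cat{M}$, and $FJ$-injectives those sent to fibrations.

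With these identities in hand, the six conditions of Proposition~\ref{thm:mswij} fall out quickly. Condition (i) (2-out-of-3 and closure under retracts for $\cat{W}$) is inherited from the corresponding property in $\cat{M}$, since $U$ preserves composition and retracts. Conditions (ii) and (iii) are exactly hypothesis~(1). For (iv) we need $FJ\cell\subseteq\cat{W}\cap FI\cof$: the inclusion into $\cat{W}$ is hypothesis~(2), while the inclusion into $FI\cof$ reduces (by Lemma~\ref{lem:icellcof} together with the stability of $FI\cof$ under pushouts, transfinite compositions and retracts) to checking $FJ\subseteq FI\cof$; but for $j\in J$ and any $g\in FI\inj$, the adjunction turns a lifting problem of $Fj$ against $g$ into one of $j$ against $Ug$, which is solvable because $Ug$ is a trivial fibration and $j$ a trivial cofibration in $\cat{M}$. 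For (v), if $g\in FI\inj$ then $Ug$ is a trivial fibration of $\cat{M}$, hence a weak equivalence and a fibration, so $g\in\cat{W}\cap FJ\inj$. For (vi), if $g\in\cat{W}\cap FJ\inj$ then $Ug$ is both a fibration and a weak equivalence in $\cat{M}$, hence a trivial fibration, so $g\in FI\inj$.

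The Quillen adjunction statement is then immediate from the two characterizations displayed above: they say exactly that $U$ preserves fibrations and trivial fibrations, which is one of the equivalent formulations of $F\dashv U$ being Quillen.

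The only genuine content hiding in the argument is condition~(iv). Generic right adjoints need not commute with the transfinite colimits that define relative cell complexes, so one cannot automatically deduce that $U$ sends an $FJ$-cell complex to a weak equivalence from the fact that $U$ sends each $Fj$ to something reasonable; this is precisely what hypothesis~(2) has been arranged to bypass, and recognising that it is the only place where the theorem is non-formal is the main conceptual point of the proof.
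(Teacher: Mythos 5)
Your proof is correct, and it is essentially the argument the paper implicitly relies on: the paper gives no proof of this proposition but cites \cite[Theorem~11.3.2]{hirschhorn:mcl}, whose proof is exactly this deduction from the recognition theorem (Proposition~\ref{thm:mswij}) via the adjunction identity $FI\inj=\{g\mid Ug\in I\inj\}$, $FJ\inj=\{g\mid Ug\in J\inj\}$. Your verification of the six conditions, the reduction of $FJ\cell\subseteq FI\cof$ to $FJ\subseteq FI\cof$, and the closing remark isolating hypothesis~(2) as the only non-formal input are all accurate.
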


\subsection{Locally Presentable Categories}
As pointed out in \cite[Remark~1.2]{beke:shmc}, locally presentable categories enjoy the property that every set of morphisms permits the small object argument. It is a well known fact that $\Cat$ is locally (finitely) presentable, and we will show later that $\Ac$ inherits that property. We will keep this short and refer the interested reader to \cite{ar:lopac}.\\
Recall that a poset $(P,\le)$ is called \emph{directed} if every pair of elements has an upper bound, and that a colimit of a diagram $X\colon I\to\cat{C}$ is called \emph{directed colimit} if $I$ is a directed poset.

\begin{mydef}
	An object $x$ of a category $\cat{C}$ is called \emph{locally finitely presentable} if the homfunctor
	\begin{equation*}
		\operatorname{hom}(x,-)\colon\cat{C}\to\mathbf{Set}
	\end{equation*}
	preserves directed colimits.
\end{mydef}

\begin{mydef}
	 A category $\cat{C}$ is called \emph{locally finitely presentable} if it is cocomplete and has a set $A$ of finitely presentable objects such that every object of $\cat{C}$ is a directed colimit of objects of $A$.
\end{mydef}

There is a useful theorem (cf.~\cite[Theorem~1.39]{ar:lopac}) that allows us to decide whether a reflective subcategory of a locally presentable category is locally presentable:

\begin{mylem}\label{lem:dircolylfp}
	Let $\cat{C}$ be a locally $\lambda$--presentable category and $\cat{A}\subseteq\cat{C}$. If $\cat{A}$ is reflective and the inclusion $i\colon\cat{A}\to\cat{C}$ preserves $\lambda$--directed colimits, then $\cat{A}$ is locally $\lambda$--presentable.
\end{mylem}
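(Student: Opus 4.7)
The plan is to transfer the set of $\lambda$-presentable generators from $\cat{C}$ to $\cat{A}$ via the reflector, and then use the assumption on $i$ to verify both presentability and the directed-colimit-generation property in $\cat{A}$.

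First, I would establish that $\cat{A}$ is cocomplete: since $\cat{A}$ is reflective in the cocomplete category $\cat{C}$, every diagram in $\cat{A}$ has a colimit computed by applying the reflector $r\colon\cat{C}\to\cat{A}$ to the colimit of the composite diagram in $\cat{C}$ (as in Lemma~\ref{lem:colimiso}). Next, pick a set $G$ of $\lambda$-presentable objects of $\cat{C}$ such that every object of $\cat{C}$ is a $\lambda$-directed colimit of objects from $G$, which exists by local $\lambda$-presentability. I would then propose the set $r(G):=\{r(g)\mid g\in G\}$ as the candidate generating set of $\lambda$-presentable objects in $\cat{A}$.

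The verification that each $r(g)$ is $\lambda$-presentable in $\cat{A}$ proceeds by a direct adjunction-plus-hypothesis computation. For any $\lambda$-directed diagram $(a_j)_{j\in J}$ in $\cat{A}$ with colimit $a$, I would write
\begin{align*}
\cat{A}(r(g),\colim_j a_j)
&\cong \cat{C}(g,i(\colim_j a_j))
\cong \cat{C}(g,\colim_j i(a_j))\\
&\cong \colim_j \cat{C}(g,i(a_j))
\cong \colim_j \cat{A}(r(g),a_j),
\end{align*}
where the first and last isomorphisms use $r\dashv i$, the second uses the hypothesis that $i$ preserves $\lambda$-directed colimits, and the third uses $\lambda$-presentability of $g$ in $\cat{C}$. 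Hence $r(g)$ is $\lambda$-presentable in $\cat{A}$.

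Finally, to see that every $a\in\cat{A}$ is a $\lambda$-directed colimit of objects in $r(G)$, write $i(a)\cong\colim_j g_j$ as a $\lambda$-directed colimit of objects $g_j\in G$ in $\cat{C}$. Applying the reflector $r$, which is a left adjoint and hence preserves all colimits, and using the natural isomorphism $r\circ i\cong\id_{\cat{A}}$ (which holds because $\cat{A}$ is reflective and fully faithfully embedded in $\cat{C}$), yields $a\cong r(i(a))\cong\colim_j r(g_j)$, a $\lambda$-directed colimit of objects from $r(G)$ in $\cat{A}$. The main conceptual point to verify is that the $\lambda$-directedness of the indexing poset is preserved throughout, which is immediate because only the diagram is changed, not its shape. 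Assembling these three ingredients yields the local $\lambda$-presentability of $\cat{A}$.
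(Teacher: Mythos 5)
The paper does not actually prove this lemma---it is quoted verbatim from Ad\'amek--Rosick\'y \cite[Theorem~1.39]{ar:lopac}---so there is no in-paper argument to compare against; your proposal is the standard proof of that cited result and it is correct. All three steps are sound: cocompleteness of $\cat{A}$ via the reflector, $\lambda$-presentability of each $r(g)$ via the adjunction $r\dashv i$ combined with the hypothesis that $i$ preserves $\lambda$-directed colimits, and generation of every object of $\cat{A}$ from $r(G)$ via $ri\cong\id_{\cat{A}}$ and cocontinuity of the left adjoint $r$. The only point left implicit is the routine naturality check that your displayed chain of isomorphisms really is the canonical comparison map $\colim_j\cat{A}(r(g),a_j)\to\cat{A}(r(g),\colim_j a_j)$, which is what $\lambda$-presentability formally requires to be invertible.
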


Recall the definition of a filtered colimit:

\begin{mydef}\label{def:filtered} A non-empty category $\cat{C}$ is called \emph{filtered}\index{category!filtered}, if
	\begin{enumerate}
		\item for every pair of objects $x_1$, $x_2$ in $\cat{C}$ there is an object $y$ in $\cat{C}$ and morphisms $f_i\colon x_i\to y$, $i=1,2$, and
		\item for any pair of parallel morphisms $f_1, f_2\colon x\to y$ there exists an object $z$ and a morphism $h\colon y\to z$, such that $hf_1= hf_2$.
	\end{enumerate}
	We call a diagram $D\colon I\to\cat{C}$ filtered if the index category $I$ is filtered and a colimit is called \emph{filtered colimit} if it is a colimit over a filtered diagram.
\end{mydef}

Sometimes it is easier to check whether a functor preserves filtered colimits instead of directed, and the following lemma (cf.~\cite[p.~15]{ar:lopac}) allows us to do so:

\begin{mylem}\label{lem:fildir}
	A functor $F\colon\cat{C}\to\cat{D}$ preserves filtered colimits if and only if it preserves directed colimits.
\end{mylem}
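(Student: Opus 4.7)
The plan is to prove the two directions separately, with the $(\Leftarrow)$ direction being immediate and the $(\Rightarrow)$ direction relying on a classical cofinality result.

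For the easy direction, I would first observe that every directed poset $(P,\le)$, viewed as a small category, is filtered in the sense of Definition~\ref{def:filtered}: condition (i) is directly the existence of upper bounds for pairs, and condition (ii) holds vacuously because a poset has at most one morphism between any two objects, so any two parallel morphisms are already equal. Hence a functor that preserves filtered colimits preserves directed colimits as a special case.

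For the converse, I would invoke the classical fact that every small filtered category $I$ admits a \emph{cofinal} functor $\phi\colon P\to I$ from a directed poset $P$ (this is, e.g., Theorem~1.5 of \cite{ar:lopac}; it goes back to SGA~4 / Deligne). Given a filtered diagram $D\colon I\to\cat{C}$, cofinality of $\phi$ yields canonical isomorphisms
\begin{align*}
\colim_P (D\circ\phi)\xrightarrow{\;\cong\;}\colim_I D,\qquad\colim_P (FD\circ\phi)\xrightarrow{\;\cong\;}\colim_I FD.
\end{align*}
Assuming $F$ preserves directed colimits, one also has $F(\colim_P D\phi)\cong \colim_P F(D\phi)$. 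Stringing the three isomorphisms together and checking that the resulting map is the canonical comparison $\colim_I FD\to F(\colim_I D)$ shows that this comparison is an isomorphism, so $F$ preserves the filtered colimit $\colim_I D$.

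The only substantive obstacle is the cofinality statement itself, whose proof is a somewhat technical (essentially set-theoretic) construction of a directed poset of ``finite approximations'' in $I$, together with a cocone produced by iterated applications of the two filteredness axioms. Since this is standard and recorded in the literature, I would cite it as a black box rather than reproduce it, keeping the proof of Lemma~\ref{lem:fildir} itself to a few lines built around the cofinal replacement $\phi\colon P\to I$.
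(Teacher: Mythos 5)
Your proposal is correct and follows exactly the standard argument behind the result the paper cites (the paper itself gives no proof, only a reference to \cite[p.~15]{ar:lopac}): the trivial direction via the observation that directed posets are filtered, and the converse via the cofinal replacement of a filtered category by a directed poset. Nothing is missing beyond the routine check that the composite isomorphism is the canonical comparison map, which you correctly flag.
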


In particular in $\Cat$, there is an explicit method to construct filtered colimits, which can---for example---be found as \cite[Proposition~2.13.3]{borceux:hca1} and \cite[5.2.2f]{borceux:hca2}:

\begin{myprop}\label{prop:colimfilset}
	Let $D\colon I\to\mathbf{Set}$ be a filtered diagram, then 
	\begin{align*}
		\colim_I D = (C,s_i\colon X_i\to C)_{i\in I}\text{,}
	\end{align*}
	where the set $C$ is given by
	\begin{equation*}
		C = \coprod_{i\in I} X_i/\mathord{\sim}\text{,}
	\end{equation*}
	where $\sim$ is defined as follows: given $x\in X_i$, $x'\in X_{i'}$, we have $x\sim x'$ if there exists a $j\in I$, together with maps $f\colon X_i\to X_j$ and $g\colon X_{i'}\to X_j$ such that $f(x) = g(x')$, and the maps $s_i$ are given as
	\begin{align*}
		s_i \colon X_i&\longrightarrow C\text{,} \\
		x&\longmapsto [x]\text{.}
	\end{align*}
\end{myprop}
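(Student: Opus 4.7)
The plan is to verify the three standard requirements for $(C, (s_i)_{i \in I})$ to be a colimit cone: that the relation $\sim$ is actually an equivalence relation, that the $s_i$ assemble into a cocone over $D$, and that this cocone is universal. The key input of filteredness enters only in the first and last steps; the middle is essentially bookkeeping.

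First I would check that $\sim$ is an equivalence relation. Reflexivity is witnessed by $j = i$ and $f = g = \id_{X_i}$, and symmetry is immediate from the symmetric form of the definition (provided one remembers, or explicitly says, that the maps $f,g$ in the statement are understood to be of the form $D(\alpha)$ for morphisms $\alpha$ in $I$). The nontrivial step is transitivity, and this is where I expect the main obstacle to lie, since it uses \emph{both} clauses of Definition~\ref{def:filtered}. Suppose $x \in X_i$, $x' \in X_{i'}$, $x'' \in X_{i''}$ with $x \sim x'$ via $\alpha\colon i\to j$, $\beta\colon i'\to j$ such that $D(\alpha)(x)=D(\beta)(x')$, and $x' \sim x''$ via $\gamma\colon i'\to k$, $\delta\colon i''\to k$ such that $D(\gamma)(x')=D(\delta)(x'')$. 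Using clause (i) on $j, k$ pick $\mu\colon j\to l$, $\nu\colon k\to l$. Now $\mu\beta$ and $\nu\gamma$ are parallel morphisms $i'\to l$; using clause (ii), choose $\xi\colon l\to m$ with $\xi\mu\beta=\xi\nu\gamma$. Then
\begin{align*}
D(\xi\mu\alpha)(x) &= D(\xi\mu)D(\alpha)(x) = D(\xi\mu)D(\beta)(x') = D(\xi\mu\beta)(x')\\
&= D(\xi\nu\gamma)(x') = D(\xi\nu)D(\gamma)(x') = D(\xi\nu)D(\delta)(x'') = D(\xi\nu\delta)(x''),
\end{align*}
so $x\sim x''$ is witnessed by $m$, $\xi\mu\alpha\colon i\to m$ and $\xi\nu\delta\colon i''\to m$.

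Next, to see that $(s_i)_{i \in I}$ is a cocone, for any morphism $\alpha\colon i\to i'$ in $I$ and any $x\in X_i$ one has $D(\alpha)(x)\sim x$ via the object $i'$ and the morphisms $\alpha\colon i\to i'$ and $\id_{i'}\colon i'\to i'$, so $s_{i'}(D(\alpha)(x)) = [D(\alpha)(x)] = [x] = s_i(x)$.

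Finally, for universality, let $(Z,(t_i\colon X_i\to Z)_{i\in I})$ be any cocone on $D$. Define $\phi\colon C\to Z$ by $\phi([x])=t_i(x)$ for $x\in X_i$. This is well-defined on equivalence classes: if $x\sim x'$ via $\alpha\colon i\to j$ and $\beta\colon i'\to j$ with $D(\alpha)(x)=D(\beta)(x')$, then using the cocone relations $t_i=t_jD(\alpha)$ and $t_{i'}=t_jD(\beta)$ gives $t_i(x)=t_j(D(\alpha)(x))=t_j(D(\beta)(x'))=t_{i'}(x')$. By construction $\phi\circ s_i=t_i$ for every $i$, and any map $C\to Z$ with this property is forced on the generating set $\coprod_i X_i$, hence on its quotient $C$, which gives uniqueness. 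The main subtlety, as noted, is ensuring that in the transitivity argument one invokes clause (ii) of filteredness to equalize the two maps out of $X_{i'}$; without it one only obtains images landing in the same set, not equal elements.
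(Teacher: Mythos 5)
Your proof is correct and is the standard argument: the paper itself gives no proof of this proposition, citing it instead to Borceux (\cite[Proposition~2.13.3]{borceux:hca1}), and your verification --- reflexivity and symmetry of $\sim$, transitivity via both clauses of Definition~\ref{def:filtered}, the cocone property, and universality --- is exactly the argument found there. You are also right to flag the two points the statement leaves implicit, namely that $f,g$ must be maps of the form $D(\alpha)$ for $\alpha$ a morphism of $I$, and that filteredness is precisely what makes $\sim$ transitive as stated rather than merely generating an equivalence relation.
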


Note that given a filtered diagram $D\colon I\to\mathbf{Set}$, if $x\in D_i$, and $[x]\in\colim_I D$, then given any morphism $D(i\to j)\colon D_i\to D_j$, we have $[x] = [D(i\to j)(x)]$.

\begin{myprop}\label{prop:colimfilcat}
	Let $D\colon I\to\mathbf{Cat}$ be a filtered diagram. There is an explicit description of $\cat{L}=\colim_I D$ given as follows: $\cat{L}^{(0)}=\colim_I D(i)^{(0)}$ is just the usual colimit in $\mathbf{Set}$. 
	Given a pair of objects $L$, $L'$ in $\cat{L}^{(0)}$, the morphism set $\cat{L}(L,L')$ is given by the colimit $\colim_I D_i(L_i,L_i')$ in $\mathbf{Set}$, where $L=[L_i]$ and $L'=[L_i']$.
\end{myprop}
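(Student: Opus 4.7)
My plan is to construct the candidate category $\cat{L}$ by hand and then verify it satisfies the universal property of the filtered colimit, paralleling the set-theoretic description given in Proposition~\ref{prop:colimfilset}. For the object set, I simply take $\cat{L}^{(0)}:=\colim_I D(i)^{(0)}$ as in $\mathbf{Set}$, so its elements are classes $[x]$ with $x\in D(i)^{(0)}$ for some $i\in I$, identified whenever they become equal after applying some morphism of $I$.

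For the morphism sets, I introduce for each pair $L,L'\in\cat{L}^{(0)}$ an auxiliary index category $I_{L,L'}$ whose objects are triples $(i,L_i,L_i')$ with $L_i,L_i'\in D(i)^{(0)}$, $[L_i]=L$, $[L_i']=L'$, and whose morphisms $(i,L_i,L_i')\to(j,L_j,L_j')$ are arrows $\alpha\colon i\to j$ in $I$ with $D(\alpha)(L_i)=L_j$ and $D(\alpha)(L_i')=L_j'$. The filteredness of $I$ together with the explicit formula for $\colim_I D(i)^{(0)}$ shows that $I_{L,L'}$ is itself filtered, so by Proposition~\ref{prop:colimfilset} we may define $\cat{L}(L,L'):=\colim_{I_{L,L'}} D(i)(L_i,L_i')$ as classes $[f]$ of morphisms in various $D(i)$ identified whenever they agree after further transport. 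Identities are $\id_L:=[\id_{L_i}]$ for any representative, and composition $[g]\circ[f]$ is defined by lifting $f\in D(i)(L_i,L_i')$ and $g\in D(j)(L_j',L_j'')$ into a common $D(k)$ along suitable $i\to k$ and $j\to k$ that identify $L_i'$ with $L_j'$, and composing there; independence of choices, associativity and unit laws follow from the filteredness of $I$. The candidate cocone maps $\iota_i\colon D(i)\to\cat{L}$ sending $x\mapsto[x]$ and $f\mapsto[f]$ are then clearly functorial and compatible with $D$.

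For the universal property, let $(\phi_i\colon D(i)\to\cat{M})_{i\in I}$ be any other cocone. Define $\Phi\colon\cat{L}\to\cat{M}$ by $\Phi([x]):=\phi_i(x)$ and $\Phi([f]):=\phi_i(f)$; well-definedness uses the cocone identities $\phi_j\circ D(\alpha)=\phi_i$ to move representatives along $I$, while functoriality is immediate from the definition of composition in $\cat{L}$. Uniqueness holds because $\Phi$ is forced on the joint image of the $\iota_i$, which covers $\cat{L}$ by construction. The only real obstacle will be the bookkeeping around $I_{L,L'}$: verifying that it is filtered, that identities and composition descend well to the equivalence classes, and that the cocone identities suffice to make $\Phi$ well-defined. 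All of this reduces to direct application of the filteredness of $I$ and Proposition~\ref{prop:colimfilset}, and no deeper machinery is required; a slicker but less self-contained alternative would invoke the fact that $\mathbf{Cat}$ is presentable as algebras for a finite-limit sketch, so the forgetful functor to graphs creates filtered colimits, and read off the hom-set formula from the commutation of filtered colimits with the finite limits defining hom-sets in $\mathbf{Set}$.
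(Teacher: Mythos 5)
The paper does not prove this proposition itself but cites it from Borceux, and your construction --- objects as the colimit of object sets, hom-sets as filtered colimits over the auxiliary category $I_{L,L'}$ of triples, followed by a direct verification of the universal property --- is exactly the standard argument given in that reference, and it is correct. The one point worth making explicit is the standard fact that two elements of a single stage $D(k)$ which become equal in a filtered colimit are already equalized by a single arrow $k\to m$; you need this (a consequence of Definition~\ref{def:filtered}) both to check that $I_{L,L'}$ is filtered and to see that composition is well defined.
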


\section{A Model structure on $\Ac$}

In this section we will establish a model structure on the category $\Ac$. For that purpose, we will show that the inclusion $i\colon\Ac\to\Cat$ preserves filtered colimits, and that pushouts of acyclic categories along sieves are again acyclic categories. We will use these features to show that we can lift the Thomason model structure on $\Cat$ along the adjunction $p\dashv i$ and obtain a model structure on $\Ac$.

\begin{myprop}\label{prop:ipfc}
	The inclusion $i\colon\Ac\to\Cat$ preserves filtered colimits.
\end{myprop}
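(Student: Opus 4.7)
The plan is to take a filtered diagram $D\colon I\to\Ac$, form the colimit $\cat{L}:=\colim_I iD$ in $\Cat$ via the explicit description of Proposition~\ref{prop:colimfilcat}, and then verify by hand that $\cat{L}$ is already an acyclic category. Once this is done, any cocone of $D$ in $\Ac$ is also a cocone of $iD$ in $\Cat$ and factors uniquely through $\cat{L}$; since $\cat{L}$ is acyclic the induced functor is a morphism of $\Ac$, so $\cat{L}$ realises the colimit of $D$ in $\Ac$ as well. In particular $i(\colim_I D)\cong\colim_I iD$. The central technical tool will be that in a filtered index category, any finite diagram of identifications (of objects or morphisms) may be absorbed into a single morphism $\delta\colon j\to k$ of $I$ realising them simultaneously already in $D_k$.

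To rule out non-identity endomorphisms, suppose $[f]\colon L\to L$ is an endomorphism of $\cat{L}$, represented by some $f\in D_i(a,b)$ with $[a]=[b]=L$. By Proposition~\ref{prop:colimfilset} applied to the set-colimit of objects, together with the coequaliser property of the filtered category $I$, the equality $[a]=[b]$ with $a,b$ both in $D_i$ is witnessed by a single morphism $\delta\colon i\to k$ in $I$ satisfying $D(\delta)(a)=D(\delta)(b)$. Then $D(\delta)(f)$ is an honest endomorphism in the acyclic category $D_k$, hence equals $\id_{D(\delta)(a)}$, so $[f]=\id_L$ in $\cat{L}$.

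To rule out non-identity isomorphisms, suppose $[f]\colon L\to L'$ has a two-sided inverse $[g]\colon L'\to L$. After pushing forward, choose representatives of $f$ and $g$ at a common index $i$. Four identifications then need to be realised simultaneously in a common $D_k$: the two object-identifications witnessing that the source/target data of $f$ and $g$ match up with $L$ and $L'$ correctly, and the two morphism-identifications $[gf]=[\id]$ and $[fg]=[\id]$. Each is a finite filtered colimit equality, and filteredness allows all four to be absorbed into one morphism $\delta\colon i\to k$ in $I$; in $D_k$ the images $D(\delta)(f)$ and $D(\delta)(g)$ are then literally mutually inverse. Since $D_k$ is acyclic they must be identities, whence $[f]=[g]=\id$ in $\cat{L}$. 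This shows $\cat{L}$ is acyclic, completing the proof. The only non-routine step is the bookkeeping of filteredness in the isomorphism case, where several distinct equalities must be transported to a common stage; this is fiddly but straightforward, and no genuine obstacle arises elsewhere.
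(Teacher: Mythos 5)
Your proposal is correct and follows essentially the same route as the paper: verify directly, via the explicit description of filtered colimits in $\Cat$ and the absorption of finitely many identifications into a single stage of the diagram, that the colimit has only identity endomorphisms and no non-identity isomorphisms. The only (harmless) difference is in the second step, where you also transport the equations $[g][f]=\id$ and $[f][g]=\id$ to a finite stage; the paper observes that realizing just the two object identifications already yields an antiparallel pair in some acyclic $D_n$, which is a contradiction without ever invoking the composites.
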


\begin{proof}
	Let $D\colon I\to\mathbf{Cat}$ be a filtered diagram such that $D_i$ is an acyclic category for every $i$ in $I$, and let $\cat{C}=\colim_I D$. At first, we will prove that any endomorphisms in $\cat{C}$ is necessarily the identity and secondly, we will show that now there are antiparallel morphisms in $\cat{C}$.\\
	To prove that any endomorphism is an identity, assume that there is an $x\in\cat{C}$, and an $[f]\in\cat{C}(x,x)$, such that $[f]\ne\id$. Hence, there is a category $D_i$, with objects $x_i, x_i'\in D_i$, such that $f\in D_i(x_i,x_i')$ and $x_i, x_i'\in x$. From the description of filtered colimits in $\Cat$, we know that there is a category $D_j$ and functors $F\colon D_i\to D_j$, $G\colon D_i\to D_j$ such that $F(x_i) = G(x_i')$. Since $D$ is filtered, there is a category $D_k$ and a functor $H\colon D_j\to D_k$ such that $H\circ F = H\circ G$. But $D_k$ is acyclic, and thus $H\circ F(f)=H\circ G(f) = \id_{H\circ F(x_i)}$. By Prop.\ \ref{prop:colimfilcat} this yields $[f]=[\id_{H\circ F(x_i)}] = \id_x$.\\
	We now want to show that there are no antiparallel morphisms in $\cat{C}$. Therefore we assume that there are objects $x,y\in\cat{C}$, together with two morphisms $[f]\colon x\to y$ and $[h]\colon y\to x$. By the construction of filtered colimits in $\Cat$ there are categories $D_i$ and $D_{i'}$ such that $f\in D_i(x_i,y_i)$, $h\in D_{i'}(y_{i'},x_{i'})$ and $[x_i]=[x_{i'}] = x$ as well as $[y_i]=[y_{i'}] = y$. We will use filteredness of $I$ and the construction of filtered colimits in $\Cat$ to construct the following diagram in five consecutive steps:
	\begin{align*}
		\xymatrix@R-1pc
		{
		D_i\ar[dddr]_{F_y}\ar[dr]^{F_x}&&&&&\\
		&D_{j_x}\ar[dr]^{H_x}&&D_l\ar[dr]^{M}&&\\
		&&D_k\ar[ur]^{E}\ar[dr]_{E'}&&D_m\ar[r]^N&D_n\\
		&D_{j_y}\ar[ur]_{H_y}&&D_{l'}\ar[ur]_{M'}&&\\
		D_{i'}\ar[uuur]^{G_x}\ar[ur]_{G_y}
		}
	\end{align*}
	First, by Prop.\ \ref{prop:colimfilcat}, there are categories $D_{j_x}$ and $D_{j_y}$, together with pairs of functors $F_x\colon D_i\to D_{j_x}$, $G_x\colon D_i'\to D_{j_x}$ and $F_y\colon D_i\to D_{j_y}$, $G_y\colon D_i'\to D_{j_y}$ satisfying $F_x(x_i) = G_x(x_{i'})$ and $F_y(y_i) = G_y(y_{i'})$. Using Def.\ \ref{def:filtered} (i), there is a category $D_k$ together with functors $H_x\colon D_{j_x}\to D_k$, $H_y\colon D_{j_y}\to D_k$.
	In particular, we have 
	$$H_x\circ F_x\ne H_y\circ F_y\colon D_i\rightrightarrows D_k$$
	and 
	$$H_x\circ G_x\ne H_y\circ G_y\colon D_{i'}\rightrightarrows D_k\text{.}$$
	Thus, by Def.\ \ref{def:filtered} (ii), there are categories $D_l$ and $D_{l'}$, together with functors $E\colon D_k\to D_l$ and $E'\colon D_{k'}\to D_{l'}$ satisfying 
	$$E\circ H_x\circ F_x = E\circ H_y\circ F_y$$
	and 
	$$E'\circ H_x\circ G_x = E'\circ H_y\circ G_y\text{.}$$
	Again by Def.\ \ref{def:filtered} (i), there is a category $D_m$ and functors $M\colon D_{l}\to D_{m}$, $M'\colon D_{l'}\to D_{m}$. Yet again by Def.\ \ref{def:filtered} (ii) there is a category $D_n$ and a functor $N\colon D_m\to D_n$ satisfying 
	$$N\circ M\circ E = N \circ M' \circ E'\text{.}$$
	Putting together the previous equations, we have 
	\begin{align*}
		&N\circ M\circ E\circ H_x\circ F_x(x_i) \\
		=\,& N\circ M'\circ E'\circ H_y\circ G_y(x_{i'}) =: x_n
	\end{align*}
	and 
	\begin{align*}
		&N\circ M\circ E\circ H_x\circ F_x(y_i) \\
		=\,& N\circ M'\circ E'\circ H_y\circ G_y(y_{i'}) =: y_n\text{.}
	\end{align*}
	Hence 
	\begin{align*}
		N\circ M\circ E\circ H_x\circ F_x(f)\in D_n(x_n, y_n)
	\end{align*}
	and 
	\begin{align*}
		 N\circ M'\circ E'\circ H_y\circ G_y(h)\in D_n(y_n, x_n)\text{,}
	\end{align*}
	which contradicts that $D_n$ is an acyclic category. Thus, the subcategory of acyclic categories is closed under taking filtered colimits, which yields in particular, that the inclusion $i\colon\Ac\to\Cat$ commutes with filtered colimits.
\end{proof}

Lemma~\ref{lem:dircolylfp} in conjunction with Lemma~\ref{lem:fildir} yields immediately:

\begin{mycor}
	The category $\Ac$ is locally finitely presentable.
\end{mycor}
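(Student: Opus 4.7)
The plan is to apply Lemma~\ref{lem:dircolylfp} directly with $\lambda=\aleph_0$. To do so, I need to assemble three ingredients: that the ambient category $\Cat$ is locally finitely presentable, that $\Ac$ sits inside $\Cat$ as a reflective subcategory, and that the inclusion $i\colon\Ac\to\Cat$ preserves $\aleph_0$-directed (i.e.\ directed) colimits.

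The first ingredient is a standard fact recalled just before the locally presentable subsection. The second is exactly what was established in Section~2: the acyclic reflection $p\colon\Cat\to\Ac$ is left adjoint to the inclusion $i$, so $\Ac$ is a reflective subcategory of $\Cat$. The third ingredient comes from the just-proved Proposition~\ref{prop:ipfc}, which shows that $i$ preserves filtered colimits; by Lemma~\ref{lem:fildir} preservation of filtered colimits coincides with preservation of directed colimits, so $i$ preserves directed colimits as required.

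With these three hypotheses in place, Lemma~\ref{lem:dircolylfp} applies verbatim, yielding that $\Ac$ is locally $\aleph_0$-presentable, i.e.\ locally finitely presentable. There is no real obstacle at this stage, since the substantive work was already done in Proposition~\ref{prop:ipfc}; the corollary is essentially a citation of the two lemmas in succession.
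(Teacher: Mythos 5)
Your proposal is correct and matches the paper's argument exactly: the paper likewise derives the corollary immediately from Lemma~\ref{lem:dircolylfp} and Lemma~\ref{lem:fildir}, using Proposition~\ref{prop:ipfc} for preservation of filtered colimits, the acyclic reflection $p\dashv i$ for reflectivity, and the standard fact that $\Cat$ is locally finitely presentable. Nothing is missing.
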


The next step is to prove that pushouts of acyclic categories along sieves in $\Cat$ are again acyclic categories. For that purpose we need a few preparational lemmas. The first of which can be found in \cite[Proposition~5.2]{frla:hin}, the second we will prove here.

\begin{mylem} \label{lem:umsiepush}Given a pushout
	\begin{equation}
		\begin{gathered}
		\xymatrix{
			\cat{A}\ar[r]^F\ar[d]^i&\cat{C}\ar[d]^j\\
			\cat{B}\ar[r]^G&\cat{B}\amalg_{\cat{A}}\cat{C}
		}
		\end{gathered}
		\label{diag:pushsieve}
	\end{equation}
 where $i\colon\cat{A}\to\cat{B}$ is a sieve, then $j$ is a full inclusion, i.e. bijective on objects and morphisms.
\end{mylem}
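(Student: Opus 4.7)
The plan is to compute the pushout via generalized congruences and then leverage the sieve hypothesis. By Lemma~\ref{lem:pushcoeq} together with the earlier description of coequalizers in $\Cat$ as quotients by principal congruences, $\cat{B}\amalg_{\cat{A}}\cat{C}$ is the quotient of $\cat{B}\amalg\cat{C}$ by the principal congruence $\osim=(\osim_o,\osim_m)$ generated by $i(a)\sim F(a)$ for $a\in\cat{A}^{(0)}$ and $i(g)\sim F(g)$ for $g\in\cat{A}^{(1)}$. On objects, every generating pair for $\osim_o$ consists of one element of $\cat{B}^{(0)}$ paired with one of $\cat{C}^{(0)}$; combined with injectivity of $i$ on objects, this forces every $\osim_o$-class to meet $\cat{C}^{(0)}$ in at most one element, so $j$ is injective on objects.

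For fullness, I would show that every $\osim_o$-composable sequence $(f_0,\dotsc,f_n)$ in $\cat{B}\amalg\cat{C}$ with source in the class $[c]$ and target in $[c']$, for $c,c'\in\cat{C}^{(0)}$, is $\osim_m$-equivalent to a single morphism of $\cat{C}$. The key input is the sieve hypothesis applied to any maximal run $f_j,\dotsc,f_k$ of consecutive morphisms lying in $\cat{B}$: either $k<n$, in which case $f_{k+1}\in\cat{C}^{(1)}$ and $t(f_k)\osim_o s(f_{k+1})\in\cat{C}^{(0)}$, or $k=n$, in which case $t(f_n)\osim_o c'\in\cat{C}^{(0)}$; in both cases the $\osim_o$-class of $t(f_k)\in\cat{B}^{(0)}$ contains an element of $\cat{C}^{(0)}$, so $t(f_k)\in i(\cat{A})$, and the sieve property yields $f_k\in i(\cat{A})$. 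The same argument, propagated backwards through the run, handles all preceding $\cat{B}$-morphisms. Replacing each such $i(g)$ by $F(g)\in\cat{C}^{(1)}$ via the generating relation produces a sequence entirely in $\cat{C}$ with literally composable consecutive endpoints, which axiom (iii) of generalized congruence collapses to a single morphism of $\cat{C}$.

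The main obstacle is faithfulness: showing that $(f)\osim_m(f')$ for $f,f'\in\cat{C}(c,c')$ forces $f=f'$. I would attack this by constructing an explicit candidate $\cat{D}$ for the pushout, with object set $\cat{C}^{(0)}\sqcup(\cat{B}^{(0)}\setminus i(\cat{A}^{(0)}))$ and hom-sets defined by case analysis on the endpoints: $\cat{D}(c,c')=\cat{C}(c,c')$ for $c,c'\in\cat{C}^{(0)}$; $\cat{D}(b,b')=\cat{B}(b,b')$ for $b,b'\notin i(\cat{A})$; $\cat{D}(c,b')=\emptyset$ (the sieve condition forbids any arrow from escaping the $\cat{C}$-part into the complement of the sieve); and $\cat{D}(b,c')$ realised as the appropriate quotient of $\bigsqcup_{a\colon F(a)=c'}\cat{B}(b,i(a))$ identifying morphisms that differ by an $\cat{A}$-action. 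The sieve property makes composition well defined, the canonical functors $\cat{B}\to\cat{D}$ and $\cat{C}\to\cat{D}$ assemble into a cocone, and verifying the universal property directly identifies $\cat{D}\cong\cat{B}\amalg_{\cat{A}}\cat{C}$. Since $j$ is by construction a full inclusion into $\cat{D}$, this isomorphism transports the property to the pushout.
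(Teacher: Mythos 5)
The paper does not actually prove this lemma; it is imported from the literature (\cite[Proposition~5.2]{frla:hin}), so there is no in-paper argument to compare against and your proposal must stand on its own. Its first two thirds do: the analysis of the object classes (each class is either a singleton $\{b\}$ with $b\in\cat{B}^{(0)}\setminus i(\cat{A}^{(0)})$, or of the form $\{c\}\cup i(F^{-1}(c))$ with $c\in\cat{C}^{(0)}$, hence meets $\cat{C}^{(0)}$ at most once) gives injectivity of $j$ on objects, and the backward propagation along maximal $\cat{B}$-runs --- using that a $\cat{B}$-morphism whose \emph{target} lies in the sieve must itself lie in the sieve --- correctly reduces every $\osim_o$-composable sequence between two $\cat{C}$-classes to a single morphism of $\cat{C}$. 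That establishes surjectivity on hom-sets.

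The gap is in the faithfulness step, and it is concrete: your explicit model $\cat{D}$ has the two mixed hom-sets interchanged. With the paper's convention a sieve is closed under morphisms \emph{into} it, so no arrow can enter $i(\cat{A})$ --- and hence the glued-in copy of $\cat{C}$ --- from its complement; it is $\cat{D}(b,c')$ that must be empty for $b\notin i(\cat{A}^{(0)})$, while $\cat{D}(c,b')$ is in general nonempty. (Take $\cat{A}=\{0\}\hookrightarrow\cat{B}=\{0\to 1\}$, a sieve; the pushout contains an arrow from $[F(0)]$ to $[1]$.) As written, your $\cat{D}$ cannot even receive the canonical functor from $\cat{B}$, since the image of $0\to 1$ has nowhere to land, so the universal-property verification fails. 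Moreover, even after swapping the two cases, the formula $\bigsqcup_{a\colon F(a)=c'}\cat{B}(b,i(a))$ modulo an $\cat{A}$-action is not the right hom-set: the correct description is the coend $\int^{a}\cat{C}(c,F(a))\times\cat{B}(i(a),b')$ with the identification $(F(g)\circ h,\,u)\sim(h,\,u\circ i(g))$ for $g\in\cat{A}(a,a')$; one must allow an arbitrary $\cat{C}$-morphism $c\to F(a)$ before crossing into $\cat{B}$, and restricting to $a$ with $F(a)$ literally equal to an endpoint while dropping the $\cat{C}$-factor loses morphisms. Since faithfulness of $j$ is precisely the claim that the generalized congruence does not identify distinct parallel morphisms of $\cat{C}$, and your only argument for it runs through this incorrectly specified model, that part of the lemma is not established. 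The strategy itself --- build the explicit pushout and check the universal property --- is the standard and correct one; it is the definition of $\cat{D}$ that needs repair.
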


\begin{mylem}\label{lem:elsiepush}
	Given the pushout diagram~\eqref{diag:pushsieve}, every element $[x]\in\cat{B}\amalg_{\cat{A}}\cat{C}$ satisfies either
	\begin{enumerate}
		\item $[x]=\{x\}$ and $x\in\cat{B}^{(0)}\setminus i(\cat{A}^{(0)})$, or
		\item there is one and only one $c\in\cat{C}$, such that $[x]=[c]$.
	\end{enumerate}
\end{mylem}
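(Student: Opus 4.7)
The plan is to use the explicit description of the pushout via Lemma~\ref{lem:pushcoeq} together with the preceding proposition realising coequalizers as quotients by principal congruences. Concretely, $\cat{B}\amalg_{\cat{A}}\cat{C}\cong(\cat{B}\amalg\cat{C})/\osim$, where $\osim=(\osim_o,\osim_m)$ is the principal congruence on $\cat{B}\amalg\cat{C}$ generated by the seed that identifies $\iota_{\cat{B}}(i(a))$ with $\iota_{\cat{C}}(F(a))$ for every $a\in\cat{A}^{(0)}$ and $\iota_{\cat{B}}(i(f))$ with $\iota_{\cat{C}}(F(f))$ for every $f\in\cat{A}^{(1)}$. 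Since an equivalence class $[x]$ of objects in the pushout is an equivalence class in $\cat{B}^{(0)}\sqcup\cat{C}^{(0)}$ under $\osim_o$, it suffices to analyse $\osim_o$.

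The first step I would carry out is to show that $\osim_o$ is nothing more than the ordinary equivalence closure on $\cat{B}^{(0)}\sqcup\cat{C}^{(0)}$ of the seed $\{(i(a),F(a)) : a\in\cat{A}^{(0)}\}$. Axiom (i) merely lifts object relations into morphism relations; axiom (ii), applied to the generator $i(f)\osim_m F(f)$, produces only $i(s(f))\osim_o F(s(f))$ and $i(t(f))\osim_o F(t(f))$, both already contained in the seed; axioms (iii) and (iv) operate on sequences of morphisms inside $\cat{B}\amalg\cat{C}$, which has no morphism bridging its $\cat{B}$-part and its $\cat{C}$-part, so iterating them cannot create new object identifications.

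Next, I would model $\osim_o$ as the connected-component relation of the bipartite graph on the vertex set $\cat{B}^{(0)}\sqcup\cat{C}^{(0)}$ with edges $\{i(a),F(a)\}$ for $a\in\cat{A}^{(0)}$. Because $i$ is a sieve, and in particular an embedding, it is injective on objects, so any $\cat{B}$-vertex of the form $i(a)$ is adjacent to exactly one $\cat{C}$-vertex, namely $F(a)$, while two distinct vertices $i(a),i(a')$ may well share the same $\cat{C}$-neighbour when $F(a)=F(a')$. Consequently every connected component is either a single $\cat{B}$-vertex $x\in\cat{B}^{(0)}\setminus i(\cat{A}^{(0)})$, giving case~(i), or a star centred at a unique $\cat{C}$-vertex $c$ (possibly with no $\cat{B}$-leaves, when $c\notin F(\cat{A}^{(0)})$), giving case~(ii) with $c$ the centre of the star.

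The only real obstacle is the bookkeeping in the second paragraph, namely confirming that none of the generalized-congruence axioms (i)--(iv) silently produces additional identifications on objects. Once that verification is in hand, the bipartite picture makes the dichotomy transparent.
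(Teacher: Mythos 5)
Your proof is correct, but it takes a genuinely different route from the paper's. The paper disposes of case (i) with the same isolated-point observation you make, and then derives case (ii) entirely from Lemma~\ref{lem:umsiepush}: since $j\colon\cat{C}\to\cat{B}\amalg_{\cat{A}}\cat{C}$ is a full inclusion (in particular injective on objects), the class of any $x\in i(\cat{A}^{(0)})$ equals the class of $F(i^{-1}(x))$ and contains no second object of $\cat{C}$. You instead bypass Lemma~\ref{lem:umsiepush} and analyse the principal congruence directly, reducing its object part to the equivalence closure of the seed $\{(i(a),F(a))\,:\,a\in\cat{A}^{(0)}\}$ and reading off the dichotomy from the bipartite star picture. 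What your argument buys is self-containedness --- it effectively reproves the object-level half of Lemma~\ref{lem:umsiepush} rather than citing it --- and it makes explicit that only injectivity of $i$ on objects is used at this stage (the full sieve hypothesis does real work only later, for morphisms). What it costs is the ``bookkeeping'' step you flag: you must verify that $\osim_o$ is no larger than the closure of the object seed. Your justification via axiom (ii) applied to generators is on the right track but slightly informal; the cleanest version is to exhibit a generalized congruence containing the seed whose object part is exactly the closure $\approx_o$ --- take the morphism part to relate two $\approx_o$-composable sequences precisely when their sources and targets are $\approx_o$-related, for which axioms (i)--(iv) are immediate --- and then conclude $\osim_o\subseteq\approx_o$ by minimality of the principal congruence.
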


\begin{proof}
	Assumption (i) is obvious, since $x$ has no preimage in $\cat{A}$, it is only equivalent to itself. On the other hand, if $x$ is not in $\cat{B}^{(0)}\setminus i(\cat{A}^{(0)})$ it has a preimage in $\cat{A}$, which has an image in $\cat{C}$ and then $(ii)$ follows directly from Lemma~\ref{lem:umsiepush}.
\end{proof}

\begin{myprop}\label{prop:pushsieve}
	Let $\cat{B}\xleftarrow{i}\cat{A}\xrightarrow{F}\cat{C}$ be a diagram of acyclic categories, and assume that $i$ is a sieve. Then the pushout in $\mathbf{Cat}$ is again an acyclic category.
\end{myprop}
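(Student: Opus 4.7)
\emph{Proof plan.} The strategy is to compute the pushout $\cat{P}:=\cat{B}\amalg_{\cat{A}}\cat{C}$ explicitly via the description of coequalizers in $\Cat$ by generalized congruences (Lemma~\ref{lem:pushcoeq} together with the preceding proposition on principal congruences), use the sieve hypothesis to reduce an arbitrary morphism in $\cat{P}$ to a normal form, and then read off acyclicity directly. The objects of $\cat{P}$ are already classified by Lemma~\ref{lem:elsiepush}: each is either a singleton $\{b\}$ with $b\in\cat{B}^{(0)}\setminus i(\cat{A}^{(0)})$, or a class $[c]$ containing a unique $c\in\cat{C}^{(0)}$.

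The crucial consequence of $i$ being a sieve is that any morphism in $\cat{B}$ whose source lies outside $i(\cat{A})$ must also end outside $i(\cat{A})$ (by the contrapositive of the sieve condition), while any morphism of $\cat{B}$ with target inside $i(\cat{A})$ lies entirely in $i(\cat{A})$ and is therefore identified in $\cat{P}$ with its $F$-image in $\cat{C}$. Applied to a $\osim_o$-composable sequence representing a morphism $x\to y$ in $\cat{P}$, these two observations let one rewrite the sequence step by step: any $\cat{B}$-segment ending in $i(\cat{A})$ can be pushed into $\cat{C}$ via $F$ and absorbed into the adjacent $\cat{C}$-segment, while once a $\cat{B}$-segment exits $i(\cat{A})$ no further transit to $\cat{C}$ is possible, since the singleton classes have no $\cat{C}$-representative. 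Combined with the composition relation $(f,h)\osim_m(h\circ f)$, every morphism $x\to y$ in $\cat{P}$ reduces to one of the following forms: a single morphism of $\cat{B}$ if both endpoints lie in $\cat{B}^{(0)}\setminus i(\cat{A}^{(0)})$; a single morphism of $\cat{C}$ if both endpoints come from $\cat{C}$-classes, where the bijection $\cat{C}(x,y)\cong\cat{P}(j(x),j(y))$ is supplied by Lemma~\ref{lem:umsiepush}; a composition of a $\cat{C}$-morphism $x\to F(a)$ with a $\cat{B}$-morphism $i(a)\to y$ whose source lies in $i(\cat{A})$, if the source is a $\cat{C}$-class and the target a singleton; and no morphism at all in the opposite mixed case.

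Acyclicity now follows immediately. A non-identity endomorphism $x\to x$ must fall into one of the first two cases since its endpoints coincide, producing a non-identity endomorphism in $\cat{B}$ or $\cat{C}$ and contradicting their acyclicity. An antiparallel pair between distinct objects likewise forces both directions into the same of the first two cases, since the mixed cases are one-directional; this again contradicts the acyclicity of $\cat{B}$ or $\cat{C}$. The main obstacle is making the normal-form argument truly rigorous---in particular verifying that the principal congruence does not accidentally identify two distinct morphisms of $\cat{B}$ with endpoints outside $i(\cat{A})$, or two distinct morphisms of $\cat{C}$, in a way that would distort the classification. The $\cat{C}$-side is handled by Lemma~\ref{lem:umsiepush}; on the $\cat{B}$-side one must trace each congruence move back to its generators and appeal to the sieve condition to block unwanted identifications.
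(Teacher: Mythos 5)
Your plan is essentially the paper's own proof: the paper also computes the pushout as a quotient by the principal congruence, classifies morphism sequences into the same three cases (entirely in $\cat{B}$ outside $i(\cat{A})$, entirely in $i(\cat{A})\amalg\cat{C}$ and hence rewritable into $\cat{C}$ via $F$ and Lemma~\ref{lem:umsiepush}, or a one-directional mixed sequence crossing out of $i(\cat{A})$ at a single index $k$), and reads off acyclicity from the acyclicity of $\cat{B}$ and $\cat{C}$ together with the sieve condition blocking a return. The rigor concern you flag at the end is handled in the paper exactly as you suggest, by the full embeddings of Lemma~\ref{lem:umsiepush} on the $\cat{C}$-side and the singleton classes of Lemma~\ref{lem:elsiepush} on the $\cat{B}$-side.
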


\begin{proof}
	The pushout of the given diagram is given by the coequalizer $Q$ of the diagram $\cat{A}\xrightrightarrows{\iota_{\cat{B}}\circ i}{\iota_{\cat{C}}\circ F}\cat{B}\amalg\cat{C}$. Where $Q$ is the quotient of $\cat{B}\amalg\cat{C}$ by the principal general congruence $(\osim_o,\osim_m)$ generated by the relation $\sim_{\iota_{\cat{C}}\circ i=\iota_{\cat{D}}\circ F}$. For the sake of convenience, we will subsequently ignore the inclusions $\iota_\cat{B}$ and $\iota_\cat{C}$ from notation, and simply write $f\in\cat{B}$ for a morphism $f$ in the image $\iota_\cat{B}(\cat{B})$.\\
	By Lemma~\ref{lem:elsiepush}, $Q^{(0)}\cong\left(\cat{B}^{(0)}\setminus i\left(\cat{A}^{(0)}\right)\right)\amalg \cat{C}^{(0)}$. Hence a morphism $f=[(f_0,\dotsc, f_n)]$ in $Q$ satisfies either 
	\begin{enumerate}
		\item $f_0,\dotsc ,f_n\in\cat{B}\setminus i\left(\cat{A}^{(0)}\right)$,
		\item either $f_i\in i(\cat{A})$, or $f_i\in\cat{C}$ for every $i=0,\dotsc, n$, or
		\item there is a $0\le k\le n$, such that:
		\begin{align*}
			f_i\in i(\cat{A})\text{ or } f_i\in\cat{C}&\qquad \text{for }i<k\\
			s(f_k)\in\cat{C}^{(0)}\amalg\cat{A}^{(0)} \text{ and } t(f_k)\in\cat{B}^{(0)}&\\
			f_i\in\cat{B}\setminus\cat{A}^{(0)}&\qquad \text{for }i>k
		\end{align*}
	\end{enumerate}
	In case (i), $(f_0,\dotsc, f_n)\sim_m f_n\circ\dotsb\circ f_0$, since $\cat{B}\setminus i\left(\cat{A}^{(0)}\right)$ embeds fully into $Q$. Thus, in particular, $[t(f_n)]\ne [s(f_0)]$ and $Q([t(f_n)],[s(f_0)])=\emptyset$.\\
	Considering case (ii), we claim that there is a composable sequence of morphisms $(h_0,\dotsc,h_n)$ in $\cat{C}$, such that $(f_0,\dotsc, f_n)\sim_m(h_0,\dotsc,h_n)$. Note therefore, that given any $\osim_o$--composable pair of morphisms $f_i$, $f_{i+1}$ in $\cat{B}\amalg\cat{C}$, satisfying condition (ii), we have $t(f_{i})\sim_os(f_{i+1})$. Hence by Lemma~\ref{lem:elsiepush}, there is a unique $x\in\cat{C}^{(0)}$, such that $x\sim_ot(f_{i})\sim_os(f_{i+1})$. Moreover, since $f_i$, $f_{i+1}$ have preimages in $\cat{A}$, by Lemma~\ref{lem:umsiepush} there are unique morphisms $h_i = F(i^{-1}(f_i))$, $h_{i+1}= F(i^{-1}(f_{i+1}))$, such that $t(h_i)\sim_o x$, and $t(h_{i+1})\sim_o t(f_{i+1})$, and since $x\sim_ot(h_i)\sim_os(h_{i+1})$, and $x$ is unique, $h_i$ and $h_{i+1}$ are composable. Thus there is a composable sequence $(h_0,\dotsc,h_n)$ of morphisms in $\cat{C}$, such that $(f_0,\dotsc, f_n)\sim_m(h_0,\dotsc,h_n)$. By definition of a generalized congruence, $(h_0,\dotsc,h_n)\sim_m h_n\circ\dotsb\circ h_0=: h$. Since $h$ is a morphism in $\cat{C}$, and $\cat{C}$ embeds fully into $Q$ by Lemma~\ref{lem:umsiepush}, it follows that $s([h])\ne t([h])$. Furthermore, by the same argument a morphism $[(f_0',\dotsc,f_n')]$ in $Q(t([h]),s([h]))$ would yield a morphism $h'\in\cat{C}(t(h),s(h))$, which contradicts $\cat{C}$ being acyclic.\\
	In case (iii), if $k=0$, $(f_0,\dotsc,f_n)\sim_m f_n\circ\dotsc\circ f_0=:f$, since $f_k$ has no preimage in $\cat{A}$ for every $k=0,\dotsc,n$, hence $[f_k]=\{f_k\}$ and thus composition is well defined. Moreover, $s(f)\ne t(f)$ by construction. And $Q(t(f), s(f))=\emptyset$ since $i(\cat{A})$ is a sieve.\\
	If $k\ne 0$, we can decompose $[(f_0,\dotsc, f_n)]$ into $[(f_k,\dotsc, f_n)]\circ[(f_0,\dotsc, f_{k-1})]$, apply the former arguments to the individual morphisms and use the fact that $s(f_0)\ne t(f_n)$ by construction.
\end{proof}

\begin{mythm}
	Consider the morphism sets
	\begin{align*}
		I = \left\{\thol\partial\stdsimp{n}\to\thol\stdsimp{n}\middle|n\in\mathbb{N}\right\}
\intertext{and}
		J = \left\{\thol\horn{n}{k}\to\thol\stdsimp{n}\middle|n\in\mathbb{N}, k\le n\right\}\text{.}
	\end{align*}
	in $\Cat$ and the adjunction $p\colon\Cat\leftrightarrows\Ac:\!i$. $\Ac$ is a proper combinatorial cofibrantly generated model category with generating cofibrations $pI$ and generating trivial cofibrations $pJ$, $p\dashv i$ is a Quillen equivalence.
\end{mythm}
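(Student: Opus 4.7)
The strategy is to apply Kan's Lemma on Transfer (Proposition~\ref{prop:liftmodcat}) to the adjunction $p\dashv i$ and the Thomason model structure on $\Cat$. Two conditions must be checked: that $pI$ and $pJ$ permit the small object argument, and that $i$ carries relative $pJ$--cell complexes to weak equivalences in $\Cat$. The first is immediate, since $\Ac$ is locally finitely presentable by the preceding corollary, and therefore every set of morphisms in $\Ac$ permits the small object argument (cf.~\cite[Remark~1.2]{beke:shmc}). In particular, the resulting model structure on $\Ac$ will be combinatorial.

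The main task is the second condition. Each map in $J$ is a Dwyer map -- hence a sieve -- between the posets $\thol\horn{n}{k}$ and $\thol\stdsimp{n}$, which are already acyclic; thus $J$ already lies in the image of $i$, and $pJ$ may be identified with $J$. A relative $pJ$--cell complex in $\Ac$ is built as a transfinite composition of pushouts of maps in $pJ$ formed in $\Ac$. By Proposition~\ref{prop:pushsieve}, each such pushout computed in $\Cat$ remains acyclic, hence by Lemma~\ref{lem:colimiso} it agrees with the pushout in $\Ac$; by Proposition~\ref{prop:ipfc}, the subsequent transfinite composition (a filtered colimit) is likewise computed in $\Cat$. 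Consequently $i$ sends a relative $pJ$--cell complex in $\Ac$ to a relative $J$--cell complex in $\Cat$, which is a trivial cofibration -- and in particular a weak equivalence -- of the Thomason model structure. This establishes the model structure on $\Ac$ with the stated generating sets, together with the Quillen adjunction $p\dashv i$.

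For the Quillen equivalence, the identical argument applied to $I$ in place of $J$ shows that every relative $I$--cell complex in $\Cat$ is acyclic; since acyclicity is preserved under retracts, every Thomason cofibrant object of $\Cat$ already lies in $\Ac$, so the unit $\eta_X\colon X\to ipX$ is an isomorphism for every cofibrant $X\in\Cat$. Given cofibrant $X\in\Cat$, fibrant $Y\in\Ac$, and $f\colon pX\to Y$ with adjunct $f^{\#}=i(f)\circ\eta_X$, the two--out--of--three property, combined with the fact that (by construction of the transferred structure) $i$ both preserves and reflects weak equivalences, shows that $f$ is a weak equivalence in $\Ac$ if and only if $f^{\#}$ is a weak equivalence in $\Cat$.

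It remains to verify properness. As a right adjoint, $i$ preserves limits, and as the right half of a Quillen adjunction it preserves fibrations, so any pullback in $\Ac$ of a weak equivalence along a fibration is carried by $i$ to such a pullback in $\Cat$; right properness of $\Ac$ then follows from right properness of the Thomason model structure. For left properness, the analysis of the second paragraph applied to $I$ shows that $i$ carries cofibrations in $\Ac$ to cofibrations in $\Cat$ and that pushouts along them in $\Ac$ coincide with the corresponding pushouts in $\Cat$, so left properness descends from the Thomason model structure. The main obstacle throughout is the compatibility of colimits in $\Ac$ with those in $\Cat$ along the generating sieves, and this is supplied uniformly by Propositions~\ref{prop:pushsieve} and~\ref{prop:ipfc}.
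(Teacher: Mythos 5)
Your proposal is correct and follows the same overall strategy as the paper: transfer along $p\dashv i$ via Kan's Lemma, with the compatibility of colimits supplied by Proposition~\ref{prop:pushsieve} (pushouts along sieves) and Proposition~\ref{prop:ipfc} (filtered colimits), then the same unit-is-an-isomorphism argument for the Quillen equivalence and the same right-properness argument. You diverge from the paper in three sub-steps, all legitimately. First, you obtain the small object argument for $pI$ and $pJ$ from local finite presentability of $\Ac$, whereas the paper re-runs the small object argument explicitly using the finiteness of the domains (Proposition~\ref{prop:ksmall}) together with the colimit-compatibility; your route is shorter and yields combinatoriality in the same breath, though note that the paper's explicit factorization is what it actually uses to see that $i$ carries $pJ$-cell complexes to $J$-cell complexes -- you recover that separately, so nothing is lost. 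Second, for cofibrant objects of $\Cat$ being acyclic you give a self-contained argument (cell complexes built from $\emptyset$ by pushouts along sieves and filtered colimits are acyclic, plus closure of acyclicity under retracts), where the paper simply cites Thomason's result that cofibrant objects are posets; your retract claim is true but unproven -- it needs that a section of a retraction is injective on objects and faithful -- and is worth a line. Third, for left properness you invoke left properness of the Thomason model structure on $\Cat$ wholesale; that is a correct but nontrivial theorem (due to Cisinski) which the paper never states, and the paper instead routes through the more specific fact that cofibrations are Dwyer maps and that pushouts of weak equivalences along Dwyer maps are weak equivalences, citing Raptis. Neither of these is a fatal gap, but both are places where you lean on facts not established in the text.
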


\begin{proof}
	Remember that the sets $I$ and $J$ are the generating cofibrations and generating trivial cofibrations for the Thomason model structure on $\Cat$. By \cite[Lemma~5.1]{thomason:ccmc}, the domains and codomains of $I$ and $J$ are posets, and by Proposition~\ref{prop:ksmall} $\kappa$--small for some finite ordinal $\kappa$. Moreover, by \cite[Proposition~4.2]{thomason:ccmc} elements of $I$ and $J$ are Dwyer morphisms. Let $f\colon x\to y$ be a morphism in $\Ac$. Since $\Cat$ is a cofibrantly generated model category, the small object argument yields a factorization $i(x)\xrightarrow{j'}E'_\infty\xrightarrow{q'} i(y)$ of $i(f)$ in $\Cat$. We know that $\kappa$ is finite, that $i$ preserves filtered colimits (and by Lemma~\ref{lem:fildir} also directed colimits) and pushouts along sieves, and that coproducts can be expressed as $\lambda$--composable sequences. Thus applying the small object argument to $f$ in $\Ac$ \wrt to $pI$ or $pJ$ yields a factorization $x\xrightarrow{j}E_\infty\xrightarrow{q} y$ satisfying $i(j)\cong j'$, $i(E_\infty)\cong E'_\infty$, and $i(q)= q'$. Hence, factorizations of morphisms between acyclic categories in $\Cat$ are identical to the inclusions of the factorizations of the respective morphisms in $\Ac$. In particular, the sets $pI$ and $pJ$ permit the small object argument and satisfy condition (i) of Proposition~\ref{prop:liftmodcat}.\\
	Furthermore, since $\Cat$ is a cofibrantly generated model category, by Lemma~\ref{lem:icellcof} and Proposition~\ref{prop:cgmccf} (iii) every relative $J$--cell complex is a trivial cofibration in $\Cat$. Since analogously to the previous reasoning, $i$ maps $pJ$--cell complexes to \mbox{$J$--cell} complexes in $\Cat$, condition (ii) of Proposition~\ref{prop:liftmodcat} is satisfied. Thus $pI$ and $pJ$ are generating cofibrations and generating trivial cofibrations for a cofibrantly generated model structure on $\Ac$ and the adjunction $p\dashv i$ is a Quillen adjunction.\\
	The category $\Ac$ is left proper, because every cofibration is a Dwyer morphism by Proposition~\ref{prop:cgmccf} (i) and \cite[Proposition~2.4 (a)]{raptis:hotopo}, and pushouts along Dwyer morphisms in $\Ac$ are the same as in $\Cat$ by Proposition~\ref{prop:pushsieve}. The category $\Ac$ is right proper, because $\Cat$ is right proper and $i$ is a right adjoint, thus preserves pullbacks.\\
	To show that $p\dashv i$ is a Quillen equivalence, note that by \cite[Proposition~5.7]{thomason:ccmc}, every cofibrant object $\cat{C}$ in $\Cat$ is a poset, thus (in particular) an acyclic category. Hence the unit component $\eta_\cat{C}\colon \cat{C}\to ip(\cat{C})$ is an isomorphism. Let $\phi\colon\Ac(p(\cat{C}),\cat{D})\to\Cat(\cat{C},i(\cat{D}))$ denote the natural isomorphism related to $p\dashv i$. Given $f\colon p(\cat{C})\to\cat{D}$ in $\Ac$, we have $\phi(f)=i(f)\circ\eta_\cat{C}$. Since $W$ is closed under isomorphism, $\phi(f)$ is a weak equivalence if and only if $i(f)$ is, and by Proposition~\ref{prop:liftmodcat} $i(f)$ is a weak equivalence if and only if $f$ is. Thus $p\dashv i$ is a Quillen equivalence.
\end{proof}

\bibliography{references}{}
\bibliographystyle{amsalpha}

\end{document}